\author{
Zichao Dong\thanks{Extremal Combinatorics and Probability Group (ECOPRO), Institute for Basic Science (IBS), Daejeon, South Korea. Supported by the Institute for Basic Science (IBS-R029-C4). \texttt{$\{$zichao,hongliu$\}$@ibs.re.kr}. }
\and Jun Gao\thanks{Mathematics Institute, University of Warwick, Coventry, UK. Supported by ERC Advanced Grant 101020255 and the Institute for Basic Science (IBS-R029-C4).
\texttt{gj950211@gmail.com}.}
\and Hong Liu\footnotemark[1]
\and Minghui Ouyang\thanks{School of Mathematical Sciences, Peking University, Beijing 100871, China. \texttt{ouyangminghui1998@gmail.com}.}
\and Qiang Zhou\thanks{Academy of Mathematics and Systems Science, Chinese Academy of Sciences, Beijing, China, and University of Chinese Academy of Sciences, Beijing, China. \texttt{zhouqiang2021@amss.ac.cn}.}
}
\title{Set families: restricted distances via restricted intersections}
\date{}
\newtheorem{theorem}{Theorem}[section]
\newtheorem{lemma}[theorem]{Lemma}
\newtheorem{corollary}[theorem]{Corollary}
\newtheorem{claim}[theorem]{Claim}
\newtheorem{proposition}[theorem]{Proposition}
\newtheorem{question}[theorem]{Question}
\newtheorem{conjecture}[theorem]{Conjecture}
\newtheorem*{example}{Example}
\newtheorem*{remark*}{Remark}
\newenvironment{poc}{\begin{proof}[Proof of Claim]}{\end{proof}}
\newcommand*{\eqdef}{\stackrel{\mbox{\normalfont\tiny def}}{=}} 
\newcommand*{\veps}{\varepsilon}                                
\newcommand*{\F}{\mathbb{F}}                                    
\DeclareMathOperator{\dist}{dist}                               
\newcommand*{\N}{\mathbb{N}}                                    
\newcommand*{\Z}{\mathbb{Z}}                                    
\newcommand*{\R}{\mathbb{R}}                                    
\renewcommand*{\S}{\mathbb{S}}                                    
\newcommand*{\cE}{\mathcal{E}}
\newcommand*{\cF}{\mathcal{F}}
\newcommand*{\cG}{\mathcal{G}}
\newcommand*{\cH}{\mathcal{H}}
\newcommand*{\cM}{\mathcal{M}}
\newcommand*{\cX}{\mathcal{X}}
\newcommand*{\cS}{\mathcal{S}}
\newcommand*{\De}{D_{\operatorname{even}}}
\begin{document}

\maketitle

\begin{abstract}
    Denote by $f_D(n)$ the maximum size of a set family $\mathcal{F}$ on $[n] \stackrel{\mbox{\normalfont\tiny def}}{=} \{1, \dots, n\}$ with distance set $D$. That is, $|A \bigtriangleup B| \in D$ holds for every pair of distinct sets $A, B \in \mathcal{F}$. Kleitman's celebrated discrete isodiametric inequality states that $f_D(n)$ is maximized at Hamming balls of radius $d/2$ when $D = \{1, \dots, d\}$. We study the generalization where $D$ is a set of arithmetic progression and determine $f_D(n)$ asymptotically for all homogeneous $D$. In the special case when $D$ is an interval, our result confirms a conjecture of Huang, Klurman, and Pohoata. Moreover, we demonstrate a dichotomy in the growth of $f_D(n)$, showing linear growth in $n$ when $D$ is a non-homogeneous arithmetic progression. Different from previous combinatorial and spectral approaches, we deduce our results by converting the restricted distance problems to restricted intersection problems. 
    
    Our proof ideas can be adapted to prove upper bounds on $t$-distance sets in Hamming cubes (also known as binary $t$-codes), which has been extensively studied by algebraic combinatorialists community, improving previous bounds from polynomial methods and optimization approaches. 
    %
\end{abstract}

\section{Introduction} \label{sec:intro}

The study of set families with restricted intersections is fundamental in extremal set theory. A cornerstone result in this area is the Erd\H{o}s--Ko--Rado theorem \cite{EKR61}, which states that for any pair of positive integers $n$ and $k$ with $n \ge 2k$, if $\cF \subseteq \binom{[n]}{k}$ is a $k$-uniform intersecting family---meaning $A \cap B \ne \varnothing \, (\forall A, B \in \cF)$---then $|\cF| \le \binom{n-1}{k-1}$. This result can be generalized to $t$-intersecting families, where $|A \cap B| \ge t \, (\forall A, B \in \mathcal{F})$. In this case, it holds that $|\cF| \le \binom{n-t}{k-t}$ for sufficiently large $n$, and the Ahlswede--Khachatrian theorem \cite{AK97} provides a complete characterization of the maximum size of a $t$-intersecting family $\mathcal{F}$ for every $n, k$ and $t$. 

Denote by $A \bigtriangleup B$ the symmetric difference between $A$ and $B$, i.e.~$(A \setminus B) \cup (B \setminus A)$, and refer to $|A \bigtriangleup B|$ as the distance between them. By replacing the intersection restriction ``$|A \cap B| \ge t$'' with the distance restriction ``$|A \bigtriangleup B| \ge t$'', we naturally arrive at a problem which is central to coding theory. Intensive research efforts have been dedicated to bounding the size of set families under various distance constraints, including Singleton bound \cite{Sin64}, Plotkin bound \cite{Plo60}, Hamming bound, Elias--Bassalygo bound \cite{Bas65}, and Gilbert--Varshamov bound \cite{Gil52, Var57, JV04}. A comprehensive overview of these developments and their significance is available in the standard reference \cite{Lin99}.

A closely related problem, originally proposed by Erd\H{o}s \cite{Erd76}, considers families $\cF \subseteq 2^{[n]}$ with a forbidden intersection size $|A \cap B| \ne \ell \, (\forall A, B \in \cF)$. Erd\H{o}s conjectured that, with an offer of \$250 for a solution, if the ratio $\frac{\ell}{n}$ lies between $\bigl( \veps, \frac{1}{2} - \veps \bigr)$, then the family size $|\cF|$ is exponentially smaller than $2^n$. This conjecture was first solved by Frankl and R\"{o}dl \cite{FR87}. Keevash and Long \cite{KL17} later provided an alternative proof and gave a restricted distance result: if $\frac{d}{n} \in (\veps, 1-\veps)$ and $\cF \subseteq 2^{[n]}$ is a family with $|A \bigtriangleup B| \ne d \, (\forall A, B \in \cF)$, then $|\cF| \le 2^{(1-\delta)n}$. We also direct readers to \cite{KSZ22}.

While research on restricted intersections has found substantial applications in areas such as Ramsey theory and discrete geometry (see, e.g., \cite[Chapter 22]{frankl_tokushige}), comparatively little attention has been devoted to the restricted-distance families. Motivated by this gap, we study in this paper set families with restricted distances. For any distance set $D$, our objective is to investigate $f_D(n)$, the largest possible size of a $D$-distance family on $[n]$. 

\subsection{Generalizing Kleitman's isodiametric inequality}

Among all smooth convex bodies of fixed diameter in $\R^n$, the classical Euclidean isodiametric inequality (see \cite{EG15}) asserts that the ball attains the maximum volume. In the discrete setting, solving a conjecture of Erd\H{o}s, Kleitman \cite{kleitman} established an isodiametric result which states that
\[
f_D(n) = \begin{cases}
    \binom{n}{0} + \binom{n}{1} + \cdots + \binom{n}{t} \qquad &\text{if $D = [2t]$}, \\
    2\Bigl( \binom{n-1}{0} + \binom{n-1}{1} + \cdots + \binom{n-1}{t} \Bigr) \qquad &\text{if $D = [2t+1]$}. 
\end{cases}
\]
The bounds above are optimal, as witnessed by Hamming balls of appropriate radius. 

Kleitman's original proof relies on sophisticated set operations. Recently, Huang, Klurman, and Pohoata~\cite{huang_klurman_pohoata} discovered a smart linear algebraic proof of Kleitman's result. They also applied their technique to general intervals as distance sets and established that
\[
\bigl( 1 + o(1) \bigr) \cdot \frac{1}{\tbinom{t}{s}} \cdot \binom{n}{t-s} \le f_{\{2s+1, 2s+2, \dots, 2t\}}(n) \le \bigl( 1 + o(1) \bigr) \cdot \binom{n}{t-s}. 
\]
They conjectured that the lower bound gives the correct asymptotics. 

\smallskip

We study the same problem for even more general distance sets that are arithmetic progressions. An integer arithmetic progression is \emph{homogeneous} if its common difference divides its first term, and \emph{non-homogeneous} otherwise. Denote by $\De \eqdef \{d \in D \colon \text{$d$ is even}\}$ the set of all even integers in $D$.

\begin{theorem} \label{thm:homo}
    Let $d, s, t$ be positive integers with $1 \le s \le t$ and $D = \bigl\{ sd, (s+1)d, \dots, td \bigr\}$ be a homogeneous arithmetic progression. As $n \to \infty$, the followings hold. 
    \vspace{-0.5em}
    \begin{itemize}
        \item If $d$ is even, then
        \[
        f_D(n) = \bigl(1 + o(1)\bigr) \cdot \frac{1}{\binom{t}{|\De|}} \cdot \binom{2n/d}{|\De|} = \bigl(1 + o(1)\bigr) \cdot \frac{1}{\binom{t}{t-s+1}} \cdot \binom{2n/d}{t-s+1}. 
        \]

        \vspace{-0.75em}
        \item If $d$ is odd, then
        \[
        f_D(n) = \bigl(1 + o(1)\bigr) \cdot \frac{c}{\binom{\lfloor t/2 \rfloor}{|\De|}} \cdot \binom{n/d}{|\De|} = \bigl(1 + o(1)\bigr) \cdot \frac{c}{\binom{\lfloor t/2 \rfloor}{\lfloor t/2 \rfloor - \lceil s/2 \rceil + 1}} \cdot \binom{n/d}{\lfloor t/2 \rfloor - \lceil s/2 \rceil + 1}, 
        \]
        where $c = 1$ if $st$ is even and $c = 2$ if $st$ is odd. 
    \end{itemize}
    \vspace{-0.5em}
\end{theorem}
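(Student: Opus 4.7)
For the lower bound, the plan is the following explicit block construction. When $d$ is even, partition $[n]$ into $m := 2n/d$ blocks $B_1, \ldots, B_m$ of size $d/2$ each. By R\"{o}dl's nibble, fix a near-Steiner packing $\mathcal{P}$ of $t$-subsets of $[m]$ such that every $(t-s+1)$-subset is covered at most once and $|\mathcal{P}| \ge (1-o(1)) \binom{m}{t-s+1}/\binom{t}{t-s+1}$. Setting $A_P := \bigcup_{i \in P} B_i$ produces a $D$-distance family, since $|A_P \triangle A_Q| = (t - |P \cap Q|) d \in \{sd, \ldots, td\}$ whenever $0 \le |P \cap Q| \le t - s$. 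When $d$ is odd, use blocks of size $d$ (so $m = n/d$) and build the analogous near-Steiner packing of $t'$-subsets with $t' := \lfloor t/2 \rfloor$ and covering radius $r = |\De|$; when $st$ is odd, adjoining a second ``shifted'' layer $\{B_1 \cup A_P : P \in \mathcal{P}'\}$ based on a packing $\mathcal{P}'$ on $[m] \setminus \{1\}$ preserves all pairwise distances in $D$ and produces the factor $c = 2$.

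For the upper bound, decompose $\cF = \bigsqcup_k \cF_k$ by size. Within $\cF_k$, the identity $|A \triangle B| = 2k - 2|A \cap B|$ converts the distance restriction $|A \triangle B| \in D$ into the intersection restriction $|A \cap B| \in L_k := \{k - id/2 : i \in [s,t] \text{ with } id \text{ even}\}$, an arithmetic progression of length $|\De|$. When $d$ is even, every element of $D$ is even, so all sets of $\cF$ share one size-parity and lie in a window of width at most $td$; hence $\cF$ has only $O(1)$ non-empty size classes. When $d$ is odd, split $\cF$ into its even-size and odd-size subfamilies: each subfamily only sees even distances (those in $\De$), while cross-parity pairs contribute odd distances. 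A direct parity-based case analysis shows that both subfamilies can simultaneously reach the maximum order only when both $s$ and $t$ are odd, giving the factor $c$. The upper bound thus reduces to the key lemma: any $k$-uniform family on $[n]$ with pairwise intersections in an arithmetic progression of common difference $d/2$ (resp.\ $d$) and length $r = |\De|$ has size at most $(1 + o(1)) \binom{m}{r}/\binom{t}{r}$ (resp.\ $\binom{m}{r}/\binom{t'}{r}$).

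To prove the key lemma, I would observe first that a direct Frankl--Wilson polynomial argument using $\prod_{l \in L_k}(|A \cap B| - l)$ only gives $\binom{n}{r}$, overshooting the target by roughly $\binom{t}{r} d^r$. To close this gap, the plan is to argue that AP intersections force the extremal family to be asymptotically supported on unions of $d/2$-blocks; once this \emph{blockification} is established, the Steiner-type packing inequality $|\cF| \cdot \binom{t}{r} \le \binom{m}{r}$, obtained by double-counting $r$-element sub-tuples of blocks contained in each $t$-block member, delivers the sharp bound. The hardest step will be executing blockification rigorously: this is likely to require either a stability refinement of Frankl--Wilson tailored to AP intersections, or a spectral / hypergraph-regularity argument on the block graph penalising sets that straddle multiple $d/2$-blocks. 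Carefully tracking the $(1 + o(1))$ error contributions from R\"{o}dl's nibble, cross-parity corrections, and residual divisibility effects (such as when $d/2 \nmid n$) will also demand care.
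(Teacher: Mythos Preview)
Your lower bound construction for even $d$ matches the paper exactly. For $dst$ odd, however, your ``shifted layer'' $\{B_1 \cup A_P : P \in \mathcal{P}'\}$ does not work as stated: if $\mathcal{P}$ and $\mathcal{P}'$ are unrelated packings, a set $P \in \mathcal{P}$ with $1 \notin P$ may coincide with some $Q \in \mathcal{P}'$, giving distance $|A_P \bigtriangleup (B_1 \cup A_Q)| = d \notin D$ once $s > 1$. The paper handles this by constructing two packings $\cF_1, \cF_2$ with \emph{controlled cross-intersections} ($|A \cap B| \le t' - s' + 1$ across the two layers), which requires carving both out of a single Keevash design and invoking an almost-regular matching theorem; a plain R\"odl nibble does not give this cross-control.

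The more serious gap is in the upper bound. Your ``key lemma'' is in fact already a theorem: the Deza--Erd\H{o}s--Frankl bound states that a $k$-uniform $L$-intersecting family with $L = \{\ell_1 < \dots < \ell_r\}$ has size at most $\prod_i \frac{n-\ell_i}{k-\ell_i}$, which for $L = k - \De/2$ gives exactly $\prod_{\ell \in \De} \frac{2n}{\ell}$, the correct asymptotic. No blockification or stability refinement of Frankl--Wilson is needed, and your proposed route through ``extremal families must be supported on block unions'' is both unproven and unnecessary.

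Finally, two further points where your outline loses the constant. First, ``$O(1)$ non-empty size classes'' only yields $O(1) \cdot \prod_{\ell \in \De} \frac{2n}{\ell}$; the paper instead shows that after normalising $\varnothing, [td] \in \cF$, all but $O(n^{|\De|-1})$ sets have size exactly $td$ (or $td, (t-1)d$ when $td$ is odd), so a \emph{single} uniform bound suffices. Second, when $d, t$ are odd and $s$ is even the two slices $\cF'_1, \cF'_2$ could a priori both be large, and ruling this out is not a parity argument: the paper uses the structural (``center'') part of Deza--Erd\H{o}s--Frankl to show that if both slices are large then their common intersections are nested in a way that lets one merge them into a single $L_1$-intersecting family. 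This is the genuinely delicate step, and your proposal does not anticipate it.
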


\Cref{thm:homo} provides asymptotically sharp bounds on $f_D(n)$ for every homogeneous arithmetic progression distance set $D$ with arbitrary common differences, which greatly generalizes previous results on interval distance sets.  In particular, by setting $d_{\ref{thm:homo}} \eqdef 1, \, s_{\ref{thm:homo}} \eqdef 2s + 1$, and $t_{\ref{thm:homo}} \eqdef 2t$, we confirm the above Huang--Klurman--Pohoata conjecture that $f_{\{2s+1, \dots, 2t\}}(n)=\bigl( 1 + o(1) \bigr) \cdot \frac{1}{\tbinom{t}{s}} \cdot \binom{n}{t-s}$. 

\smallskip

Somewhat surprisingly, the non-homogeneous arithmetic progressions exhibit a completely different behavior. In fact, we prove that $f_D(n)$ grows only linearly in this case. 

\begin{theorem} \label{thm:nonhomo}
    Given $n, d, s, t, a \in \mathbb{N}_+$ with $1 \le s \le t$ and $1 \le a < d$. For any non-homogeneous arithmetic progression $D = \bigl\{ sd+a, (s+1)d+a, \dots, td+a \bigr\}$, the followings hold. 
    \vspace{-0.5em}
    \begin{itemize}
        \item If $\De = \varnothing$, then $f_D(n) = 2$ when $n \ge \min(D)$, and $f_D(n) = 1$ otherwise. 
        \vspace{-0.5em}
        \item If $\De \ne \varnothing$, then $\bigl\lfloor \frac{2n}{\min(\De)} \bigr\rfloor \le f_D(n) \le n + 2$.
    \end{itemize}
\end{theorem}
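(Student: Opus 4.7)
I would split the proof along the two bullets.

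For $\De = \varnothing$, a parity argument gives $|\cF| \le 2$. For any three sets $A, B, C \subseteq [n]$, each element of $[n]$ contributes $0$ or $2$ to $|A \bigtriangleup B| + |A \bigtriangleup C| + |B \bigtriangleup C|$, so this sum is always even; but three distances from an all-odd $D$ sum to an odd number, so no three distinct sets can all sit in $\cF$. Equality $|\cF| = 2$ is realized by $\{\varnothing, X\}$ with $|X| = \min D$ whenever $n \ge \min D$.

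For $\De \ne \varnothing$, the lower bound is a disjoint-union construction: with $e \eqdef \min(\De)$, any $\lfloor 2n/e \rfloor$ pairwise-disjoint subsets of $[n]$ of size $e/2$ have pairwise symmetric difference exactly $e \in D$, so form a valid $D$-distance family.

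The main work is the upper bound $|\cF| \le n + 2$, which I plan to prove by a modular Gram-matrix argument with a scaling trick. After replacing $\cF$ by $\{A \bigtriangleup A_0 : A \in \cF\}$ for a fixed $A_0 \in \cF$, I may assume $\varnothing \in \cF$; write $\cF' \eqdef \cF \setminus \{\varnothing\}$ and $N \eqdef |\cF'|$. Each $A \in \cF'$ satisfies $|A| \in D$, and the identity $2|A \cap B| = |A| + |B| - |A \bigtriangleup B|$ shows that for distinct $A, B \in \cF'$ both $|A|$ and $|A \cap B|$ are constant modulo $m \eqdef d$ (if $d$ is odd) or $m \eqdef d/2$ (if $d$ is even, in which case $\De \ne \varnothing$ forces $a$ even), equal to residues $\mu, \lambda$ with $\mu - \lambda \equiv a/2 \pmod m$; this is nonzero precisely by the non-homogeneity hypothesis $a \not\equiv 0 \pmod d$. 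Hence the Gram matrix $G$ of $\cF'$ satisfies $G \equiv \mu I + \lambda (J - I) \pmod m$, and writing $G = M M^T$ for the $N \times n$ incidence matrix $M$ gives $\operatorname{rank}_{\mathbb{Q}}(G - \lambda J) \le n + 1$.

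To bound $N$ from above, I set $g \eqdef \gcd(\mu - \lambda, m) < m$ and $H \eqdef (G - \lambda J)/g$; divisibility of diagonal and off-diagonal entries by $g$ makes $H$ an integer matrix. For any prime $p \mid m/g$ (which exists since $m/g \ge 2$) one has $H \equiv ((\mu - \lambda)/g)\, I \pmod p$ with $(\mu - \lambda)/g$ coprime to $p$, so $\operatorname{rank}_{\F_p}(H) = N$. Since the $\F_p$-rank of an integer matrix is at most its $\mathbb{Q}$-rank, $N \le \operatorname{rank}_{\mathbb{Q}}(H) = \operatorname{rank}_{\mathbb{Q}}(G - \lambda J) \le n + 1$, whence $|\cF| = N + 1 \le n + 2$. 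The main obstacle is the degenerate regime where every prime dividing $m$ also divides $\mu - \lambda$ (e.g.\ $d = 9, a = 3$): a naive reduction modulo such a $p$ collapses the matrix, and the pre-scaling by $g$ is precisely what rescues the argument in this case.
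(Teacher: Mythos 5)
Your proposal is correct, and the first two parts (the $\De = \varnothing$ parity argument and the disjoint-blocks lower bound) coincide with the paper's. For the upper bound the paper and you reach the same modular intersection pattern---$|A| \equiv \mu$ and $|A \cap B| \equiv \lambda \pmod m$ with $\mu \not\equiv \lambda$---but you finish it differently. The paper invokes the Babai--Frankl--Kutin--\v{S}tefankovi\v{c} theorem as a black box: it first extracts a prime power $q = p^k$ dividing $d$ but not $a$, then splits into $p = 2$ (working modulo $2^{k-1}$) versus $p \ge 3$, and applies the BFKS bound with $|L| = 1$ to get $|\cF'| \le n+1$. You instead give a self-contained Gram-matrix rank argument: $G = M M^{\mathsf T}$ has $\mathbb{Q}$-rank $\le n$, so $G - \lambda J$ has $\mathbb{Q}$-rank $\le n+1$; after the scaling $H = (G - \lambda J)/g$ with $g = \gcd(\mu - \lambda, m)$, reduction modulo any prime $p \mid (m/g)$ turns $H$ into a nonzero scalar multiple of $I$, hence $\operatorname{rank}_{\F_p} H = N$, and since $\F_p$-rank of an integer matrix is bounded by its $\mathbb{Q}$-rank, $N \le n+1$. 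I checked the delicate points and they hold: $g < m$ because $\mu - \lambda \not\equiv 0 \pmod m$ (non-homogeneity); all entries of $G - \lambda J$ are divisible by $g$ because the off-diagonals are divisible by $m$ and the diagonals are $\equiv \mu - \lambda \pmod m$; and $(\mu-\lambda)/g$ is coprime to $m/g$, hence to $p$. The gcd-scaling trick neatly replaces the paper's hand-chosen prime-power modulus and avoids the $p=2$/$p \ge 3$ case split. In effect you have reproven, in elementary fashion, the $|L|=1$ instance of BFKS that the paper cites; what you lose is the possibility of reusing the general BFKS machinery later, but what you gain is a shorter, self-contained proof of exactly the bound needed here.
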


Our approach differs from both the classical combinatorial shifting in~\cite{kleitman} and the linear algebraic method in~\cite{huang_klurman_pohoata}. For upper bounds, the main idea is to convert the restricted distance problems to restricted intersection problems. In \Cref{thm:homo}, given a family $\cF$ with prescribed distances, the key is to identify within $\cF$ a subfamily $\cF'$ which constitutes the majority of our family $\cF$ (\Cref{obs:aux_diff}) and is located on at most two slices of the hypercube (\Cref{obs:aux_size}). The difficult case is when the distance set $D = \bigl\{ sd, (s+1)d, \dots, td \bigr\}$ is an arithmetic progression with odd $dt$ and even $s$. To obtain the correct asymptotics for this case, we have to show that the two slices of $\cF'$ cannot be both large at the same time. To this end, we show that between the two slices of $\cF'$, there is a restricted (cross) intersection pattern and we need to analyze the structure of the common intersection of each slice. In \Cref{thm:nonhomo}, we apply a modular restricted intersection result. It is worth noting that the lower bound constructions towards~\Cref{thm:homo} involve additional difficulty. Inspired by~\cite{huang_klurman_pohoata}, we employ the classical R\"{o}dl nibble method. When $d, s, t$ are all odd, we have to build up a ``double almost design'' to achieve an additional factor $2$. 

\subsection{Bounding the size of binary \texorpdfstring{$t$}{t}-codes} 

Our method is versatile, and it applies to the study of binary $t$-codes as well. We begin with some terminologies. Let $\cM$ be a metric space endowed with distance function $\dist_{\cM}$. For finite $X \subseteq \cM$, denote by $D(X) \eqdef \bigl\{\dist_\cM(x, y) \colon x, y \in X, \, x \ne y\bigr\}$ the \emph{distance set} of $X$. For fixed $D$, we define 
\[
A(\cM, D) \eqdef \max \bigl\{ |X| : \text{$X \subseteq \cM$ such that $D(X) = D$} \bigr\}. 
\]
Following the terminology of coding theory, we call $X$ a $t$-\emph{code} if $|D(X)| = t$. Denote the maximum cardinality of a $t$-code as $A(\cM, t) \eqdef \max \bigl\{ A(\cM, D) : |D| = t \bigr\}$.

It is a fundamental challenge to establish good upper bounds on $A(\cM, t)$. Historically, Einhorn and Schoenberg \cite{einhorn_schoenberg_1,einhorn_schoenberg_2} initiated the study of $t$-codes in Euclidean space $\cM = \R^n$. As a classical application of the polynomial method, Larman, Rogers, and Seidel \cite{larman_rogers_seidel} (later slightly improved by Blokhuis \cite{blokhuis}) deduced that $\frac{n(n+1)}{2} \le A(\R^n, 2) \le \frac{(n+1)(n+2)}{2}$. The same problem for Euclidean sphere $\cM = \S^{n-1} \subseteq \R^n$ has also been intensively studied. Delsarte, Goethals, and Seidel \cite{DGS77} established $\frac{n(n+1)}{2} \le A(\S^{n-1}, 2) \le \frac{n(n+3)}{2}$. Related results and improvements can be found in a large series of papers \cite{musin2009,nozaki,BM11,musin_nozaki,barg_yu,yu,glazyrin_yu,musin2019,jiang_tidor_yao_zhang_zhao2023,liu_yu}.

We focus on such a problem concerning Hamming cubes. Let $\cH_n$ be the set of binary strings $\F_2^n$ endowed with the Hamming distance. (For any $\bm{x}, \bm{y} \in \F_2^n$, their \emph{Hamming distance} $\dist_{\cH_n}(\bm{x}, \bm{y})$ is the number of bits at which $\bm{x}$ differs from $\bm{y}$.) The study of binary codes has long been central to coding theory. Two-distance sets in $\cH_n$, binary $2$-codes, have received considerable attention as they connect to finite geometry. See \cite{calderbank_kantor} for an early survey and \cite{shi_honold_sole_qiu_wu_sepasdar} for recent work. Due to the extensive interest on $t$-codes in $\R^n$ and $\S^{n-1}$, it is natural to bound $A(\cH_n, t)$. Earliest such results date back to the influential work of Delsarte \cite{Del73} in 1973, where he established $A(\cH_n, t) \le \sum_{i = 0}^{t} \binom{n}{i}$. Most such results are obtained by either the polynomial method \cite{BBS83, Blo84} or the linear programming method~\cite{Del73, DGS77}, which reduce the problem to the study of spherical $t$-codes in $\mathbb{R}^n$. A folklore conjecture, communicated to us by Yu \cite{yu_private}, predicts that the maximum value is achieved by the $t$-set $\{2,4,\ldots, 2t\}$. 

\begin{conjecture}[Folklore]\label{conj:maxcode}
    For every sufficiently large $n$ (with respect to $t$), we have 
    \[
    A(\cH_n, t) = A \bigl( \cH_n, \{2, 4, \dots, 2t\} \bigr) = 
    \begin{cases}
        \binom{n}{0} + \binom{n}{2} + \dots + \binom{n}{t-2} + \binom{n}{t} \qquad &\text{if $t$ is even}, \\
        \binom{n}{1} + \binom{n}{3} + \dots + \binom{n}{t-2} + \binom{n}{t} \qquad &\text{if $t$ is odd}. 
    \end{cases}
    \]
\end{conjecture}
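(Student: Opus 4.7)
The plan is to match the conjectured lower-bound construction with a sharp upper bound obtained by reusing the restricted-distance-to-restricted-intersection reduction that underlies \Cref{thm:homo}. For the construction, let $\cF_t \subseteq 2^{[n]}$ be the collection of all subsets whose cardinality has the same parity as $t$ and is at most $t$. For distinct $A, B \in \cF_t$, the distance $|A \bigtriangleup B| = |A| + |B| - 2|A \cap B|$ is even and at most $2t$, hence lies in $\{2, 4, \dots, 2t\}$; conversely, by varying $|A \cap B|$ across pairs of fixed weights one realises every even value in $\{2, 4, \dots, 2t\}$ once $n$ is large enough, so the distance set is exactly $\{2, 4, \dots, 2t\}$ and $|\cF_t|$ matches the conjectured sum.

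For the upper bound, given a $t$-code $X \subseteq \cH_n$ with distance set $D$, write $t_e := |D \cap 2\Z|$ and $t_o := t - t_e$, and translate so that $\bm{0} \in X$; then every codeword has Hamming weight in $D \cup \{0\}$. Partition $X$ into slices $X_d := X \cap \binom{[n]}{d}$ for $d \in D \cup \{0\}$. Within a single slice, $|A \cap B| = d - |A \bigtriangleup B|/2$ takes at most $t_e$ values as $|A \bigtriangleup B|$ ranges over $D \cap 2\Z$, so Ray-Chaudhuri--Wilson gives $|X_d| \le \binom{n}{t_e}$. If $t_o > 0$ then $t_e \le t - 1$, and summing over the at most $t + 1$ occupied slices yields $|X| = o\bigl( \binom{n}{t} \bigr)$, strictly below the conjectured maximum. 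Hence the extremal case reduces to $t_o = 0$, where $D$ consists of $t$ even numbers and $X$ lives in a single parity class; \Cref{thm:homo} applied with $D = \{2, 4, \dots, 2t\}$ already yields $f_D(n) = (1 + o(1)) \binom{n}{t}$, matching the leading order of the conjecture. To promote this to the claim that $\{2, 4, \dots, 2t\}$ is the maximiser among all even $t$-sets, I would run a shifting argument comparing the distance set $D$ to an initial even segment, in the spirit of the combinatorial shifts in \cite{kleitman}, showing that spreading the distances out can only decrease $|X|$ asymptotically.

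The main obstacle is upgrading the asymptotic $(1 + o(1)) \binom{n}{t}$ bound to the exact closed-form expression claimed by the conjecture for all sufficiently large $n$. This requires a stability result identifying the parity-restricted Hamming ball as the unique extremiser up to translation, together with a sharpening of the Ray-Chaudhuri--Wilson slack on the top slice, since the bound $\binom{n}{t_e}$ is attained only for Steiner-type designs and a near-extremal code must instead realise a whole parity-truncated Hamming ball. I expect this stability-plus-exact-extremal step to be the delicate piece, paralleling the difficulty of upgrading asymptotic $t$-intersecting bounds to the Ahlswede--Khachatrian exact form, and it is plausible that only the asymptotic version of the conjecture is within reach of the present framework.
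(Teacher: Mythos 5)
You are trying to prove a statement that the paper itself only \emph{conjectures}: the concluding remarks say explicitly that the approach ``fails to deduce the first equality for all $D$,'' and the best the paper establishes is \Cref{thm:exact}, which handles only \emph{fixed} $t$-sets $D$ (more generally $\max D \le \frac{\log n}{2}$). So you should be aware that even a fully executed version of your plan would at most recover \Cref{thm:exact}, not \Cref{conj:maxcode}; the Hadamard-matrix example $A(\cH_n,\{n/2\})=n=A(\cH_n,\{2\})$ in the remarks shows why the restriction to $D$ not growing with $n$ is essential. Your opening moves (translate so $\bm{0}\in X$, slice by weight, use the fact that within a slice distances are even so Frankl--Wilson/RCW gives $\binom{n}{t_e}$, conclude $o(\binom{n}{t})$ when $D$ has any odd element) are sound and coincide with the paper's reduction via \Cref{coro:distance_weak}.

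The genuine gaps come after that. First, to separate general even $D$ from $\{2,4,\dots,2t\}$ you invoke a ``shifting argument in the spirit of Kleitman'' without specifying it; this step is exactly where the difficulty lives, and the paper does \emph{not} shift. Instead it isolates the top slice $\cF'$, shows it is $\max(D)$-uniform, and applies the Deza--Erd\H{o}s--Frankl bound $\prod_{\ell\in D}\frac{2n}{\ell}$ (\Cref{lem:uniform}); the crucial observation is the purely numerical fact that for an even $t$-set $D\ne\{2,4,\dots,2t\}$ one has $\prod_{\ell\in D}\ell\ge 2^t(t-1)!(t+1)$, which yields the $\frac{t}{t+1}$ constant and hence strict separation. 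Your route via \Cref{thm:homo} only covers arithmetic-progression $D$, so it cannot by itself dominate an arbitrary even $t$-set. Second, and more seriously, you have no mechanism for the \emph{exact} value $A(\cH_n,\{2,4,\dots,2t\})$: you correctly flag this as ``the delicate piece'' but stop there. The paper proves it by an entirely different tool, the Cvetkovi\'c eigenvalue-interlacing bound (\Cref{prop:Cvetkovic}) applied to a pseudo-adjacency matrix $M=\sum_{k\ge t+1}\binom{k-1}{t}M_k$ whose eigenvalues on the characters $\bm{v}_S$ are evaluated via a generating-function computation (\Cref{prop:eigen} and the $[x^0]$-coefficient argument). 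Nothing in the intersection-theorem framework, and in particular no stability-type refinement of Ray--Chaudhuri--Wilson, is known to deliver that exact count; so as written the proposal is missing the entire idea needed for the second equality.
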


Towards this folklore conjecture, Nozaki and Shinohara \cite{NS10} refined the result of Delsarte \cite{Del73} to $A(\cH_n, t) \le \sum_{i:c_i > 0} \binom{n}{i}$, where $c_i$ are the coefficients defined by a polynomial related to $D$ in terms of the Krawtchouk polynomial basis. Barg and Musin \cite{BM11} further established an explicit formula $A(\cH_n, t) \le \binom{n}{t} + \sum_{i = 0}^{t-2} \binom{n}{i}$ under the condition $\sum_{d\in D}d \le \frac{tn}{2}$. For small values of $t$, partial results were proved in \cite{BM11} for $t = 2, 3, 4$. Later, the $t = 2$ case was resolved by Barg et al.~\cite{barg_glazyrin_kao_lai_tseng_yu}. 

\smallskip

Our next result establishes $A(\cH_n, D) \le A \bigl( \cH_n, \{2, 4, \dots, 2t\} \bigr)$ for any fixed $D$ and every sufficiently large $n$, which makes significant progress towards the first equality in \Cref{conj:maxcode}. We shall also prove that the second equality holds in \Cref{conj:maxcode}. 

\begin{theorem} \label{thm:exact}
    Let $D \subseteq \mathbb{N}_+$ be a $t$-set. If $D \ne \{2, 4, \dots, 2t\}$, then as $n \to \infty$ we have
    \[
    A(\cH_n, D) \le \Bigl(\frac{t}{t+1}+o(1)\Bigr) \cdot \binom{n}{t}. 
    \]
    When $n \ge 2t + 2$, we have 
    \[
    A \bigl( \cH_n, \{2, 4, \dots, 2t\} \bigr) = \begin{cases}
        \binom{n}{0} + \binom{n}{2} + \dots + \binom{n}{t-2} + \binom{n}{t} \qquad &\text{if $t$ is even}, \\
        \binom{n}{1} + \binom{n}{3} + \dots + \binom{n}{t-2} + \binom{n}{t} \qquad &\text{if $t$ is odd}. 
    \end{cases}
    \]
   
\end{theorem}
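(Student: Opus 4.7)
The lower bound in the second statement is constructive: for $t$ even take $\cF$ to consist of all subsets of $[n]$ of size in $\{0, 2, \dots, t\}$, and for $t$ odd of size in $\{1, 3, \dots, t\}$. Pairwise symmetric differences are then all even and at most $2t$, and the hypothesis $n \ge 2t + 2$ ensures every value in $\{2, 4, \dots, 2t\}$ is attained, so $D(\cF) = \{2, 4, \dots, 2t\}$. The cardinalities match the claimed formulas.

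For the upper bound in the first statement, I work in the ``restricted distance to restricted intersection'' framework. First, if $D$ contains an odd integer, then any two same-parity sets in $\cF$ have even symmetric difference, so $\cF = \cF_{\mathrm{even}} \cup \cF_{\mathrm{odd}}$ splits by set-parity with each subfamily having distances in $D_e \eqdef D \cap 2\Z$ of size at most $t - 1$. Delsarte's polynomial bound on binary codes with $s$ distances then yields $|\cF_{\mathrm{even}}|, |\cF_{\mathrm{odd}}| \le (1+o(1))\binom{n}{t-1} = o\bigl(\binom{n}{t}\bigr)$, finishing this case. Otherwise $D \subseteq 2\Z_+$ with $D \ne \{2, 4, \dots, 2t\}$, which forces $\max D \ge 2t + 2$ and monochromatic parity. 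Translating by a fixed $F_0 \in \cF$ places $\varnothing$ in $\cF' \eqdef \{F \bigtriangleup F_0 : F \in \cF\}$, with sizes in $\{0\} \cup D$; each uniform slice $\cF'_{2e_j}$ is a restricted-intersection family taking at most $t$ intersection sizes, so Ray--Chaudhuri--Wilson gives $|\cF'_{2e_j}| \le \binom{n}{t}$. Adapting the concentration strategy underlying \Cref{thm:homo} (the Observations \emph{aux\_diff} and \emph{aux\_size} highlighted in the introduction), the analysis reduces to bounding the top slice $\cF'_{2e_t}$. Since $D \ne \{2, 4, \dots, 2t\}$, the intersection pattern $L = \{2e_t - e_i : 1 \le i \le t\}$ deviates from the canonical initial segment $\{2e_t - 1, \dots, 2e_t - t\}$ (the only pattern asymptotically saturating Ray--Chaudhuri--Wilson), and a Deza--Frankl sunflower analysis sharpens the bound on $|\cF'_{2e_t}|$ to $\bigl(\frac{t}{t+1} + o(1)\bigr)\binom{n}{t}$.

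For the exact upper bound in the second statement, $\cF$ has monochromatic parity, and the distance constraint translates to a ``diameter at most $t$'' constraint in the halved-cube metric on the fixed-parity layer of $\F_2^n$. I would employ a parity-preserving shifting---for each pair $\{i, j\} \subseteq [n]$, the involution $A \mapsto A \bigtriangleup \{i, j\}$ preserves both parity and every pairwise symmetric-difference size, hence preserves the distance set---and iterate it to compress $\cF$ to a canonical monochromatic-parity family, which by slice-wise counting must coincide with the construction above, yielding the matching upper bound. The main obstacle is extracting the factor $\frac{t}{t+1}$ in the first statement: unrefined Ray--Chaudhuri--Wilson gives only $(1+o(1))\binom{n}{t}$, so one must classify near-extremal uniform restricted-intersection families (identifying a common kernel via a Deza--Frankl sunflower theorem) and exploit the specific ``missing'' intersection value forced by $D \ne \{2, 4, \dots, 2t\}$ to bound the number of sunflower petals.
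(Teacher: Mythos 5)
Your lower bound construction for the second statement matches the paper's, and your route to the first upper bound is broadly the paper's route: reduce to an all-even distance set, decompose via a uniform ``top slice,'' and invoke Deza--Erd\H{o}s--Frankl (the paper's Lemma~\ref{lem:uniform} packages this as $|\cG| \le \prod_{\ell\in D}\frac{2n}{\ell}$, and $D \ne \{2,4,\dots,2t\}$ forces $\prod_{\ell\in D}(\ell/2) \ge (t-1)!(t+1)$, giving $\frac{t}{t+1}\binom{n}{t}$ directly without needing to ``classify near-extremal families''; the paper also has a separate easy case $\max D \ge 4t^2$ before using the concentration argument). So far this is essentially the paper's approach, if vaguer about the last step.

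The real problem is the upper bound for $A(\cH_n,\{2,4,\dots,2t\})$. Your proposed mechanism is ``parity-preserving shifting'' via $A \mapsto A \bigtriangleup \{i,j\}$, but this does not do what you need. If you apply $A \mapsto A\bigtriangleup\{i,j\}$ to \emph{every} member of $\cF$ simultaneously, it is a global isometry of the Hamming cube --- it preserves all pairwise distances but compresses nothing, so iterating it cannot bring $\cF$ to any canonical form. If instead you shift \emph{some} members of $\cF$ and not others (as compression arguments in intersection theory do), then pairwise symmetric differences between a shifted set and an unshifted set are \emph{not} preserved, so the shifted family need not remain a $\{2,4,\dots,2t\}$-distance family; there is no standard $(i,j)$-compression compatible with two-sided distance constraints. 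Moreover, ``slice-wise counting'' against the extremal family fails even if you could compress: a $\{2,4,\dots,2t\}$-distance family need not live in the Hamming ball of radius $t$ (e.g.\ for $t=1$ the family $\{\{1,i\} : 2\le i\le n\}$ has all sets of size $2$), so there is no a priori bound on which slices appear. This is a genuine gap: the paper's proof of the exact bound is a spectral \emph{inertia} (Cvetkovi\'c) argument. It forms the graph on even-sized subsets with edges at distance $\ge 2t+2$, builds the pseudo-adjacency matrix $M = \sum_{k\ge t+1}\binom{k-1}{t} M_k$, diagonalizes it in the character basis $\bm{v}_S$, reads off the sign of the eigenvalue $\lambda_S$ via a generating-function calculation, and then bounds the independence number (hence $|\cF|$) by the number of eigenvalues of the favorable sign. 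None of this is recoverable from a shifting argument, and you would need to supply it.
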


We remark that the bound $n \ge 2t + 2$ above is optimal. Indeed, when $n \le 2t + 1$, the behavior changes into $A \bigl( \cH_n, \{2, 4, \dots, 2t\} \bigr) = 2^{n-1}$. Here the lower bound is witnessed by the collection of all binary strings with odd (or even) numbers of $1$-s, and the upper bound follows from the observation that $\dist_{\cH_n}(\bm{x}, \bm{y})$ is odd when the number of $1$-s in $\bm{x}$ and $\bm{y}$ are of opposite parities. 

\Cref{thm:exact} can be viewed as a stability result, for it infers that the distance set $\{2, 4, \dots, 2t\}$ is the unique maximizer, showing that $A(\cH_n, D)$ is substantially smaller than $A \bigl( \cH_n, \{2, 4, \dots, 2t\} \bigr)$ for any other fixed $t$-set $D$. In contrast, due to the leading term $\binom{n}{t}$, the previous bounds could not perform well when $D$ differs from $\{2, 4, \dots, 2t\}$.

\vspace{-0.5em}

\paragraph{Paper organization.} We first prove \Cref{thm:homo}. The lower bounds are given in \Cref{sec:homo_lower} and the upper bounds are deduced in \Cref{sec:homo_upper}. We then prove \Cref{thm:nonhomo} in \Cref{sec:nonhomo}. In \Cref{sec:exact}, we prove~\Cref{thm:exact}. Finally, we include some further discussions and open problems in \Cref{sec:remark}. 

\section{Proof of the lower bounds in \texorpdfstring{\Cref{thm:homo}}{Theorem 1.3}} \label{sec:homo_lower}

In this section, we prove the lower bounds in \Cref{thm:homo}. Depending on the parity of $dst$, we divide the constructions for the lower bounds into two cases. 

\begin{theorem} \label{thm:homo_lower}
    Suppose $d, s, t$ are positive integers with $1 \le s \le t$. Let $D = \bigl\{ sd, (s+1)d, \dots, td \bigr\}$ be a homogeneous arithmetic progression. The following hold for sufficiently large $n$. 
    
    \vspace{-0.5em}
    \begin{enumerate}[label=(\roman*), ref=(\roman*)]
        \item \label{lower:even} If $dst$ is even, then there exists a $D$-distance set family $\cF \subseteq 2^{[n]}$ of size
        \[
        |\cF| = \begin{cases}
            \bigl(1 - o(1)\bigr) \cdot \dfrac{\binom{2n/d}{t-s+1}}{\binom{t}{t-s+1}} \qquad &\text{when $d$ is even}, \\
            \bigl(1 - o(1)\bigr) \cdot \dfrac{\binom{n/d}{\lfloor t/2 \rfloor - \lceil s/2 \rceil + 1}}{\binom{\lfloor t/2 \rfloor}{\lfloor t/2 \rfloor - \lceil s/2 \rceil + 1}} \qquad &\text{when $d$ is odd}. 
        \end{cases}
        \]

        \vspace{-0.75em}
        \item \label{lower:odd} If $dst$ is odd, then there exists a $D$-distance set family $\cF \subseteq 2^{[n]}$ of size
        \[
        |\cF| = \bigl(2 - o(1)\bigr) \cdot \frac{\binom{n/d}{\lfloor t/2 \rfloor - \lceil s/2 \rceil + 1}}{\binom{\lfloor t/2 \rfloor}{\lfloor t/2 \rfloor - \lceil s/2 \rceil + 1}}. 
        \]
    \end{enumerate}
\end{theorem}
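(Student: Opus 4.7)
The plan is to reduce every sub-case to constructing a near-optimal partial Steiner system on a smaller ground set, and then to scale up distances by a simple blow-up. The key observation is the \emph{blow-up reduction}: if $\mathcal{G}\subseteq 2^{[m]}$ is a $D_0$-distance family and each element of $[m]$ is replaced by $k$ identical copies, then the resulting family in $2^{[mk]}$ has the same cardinality but pairwise distances in $k\cdot D_0=\{kd:d\in D_0\}$. By choosing $(m,k)$ suitably we can reduce the problem to constructing a family whose distances lie in an integer interval on a much smaller ground set.

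For the $d$ even sub-case of \ref{lower:even}, I would set $m=2n/d$, $k=d/2$, and invoke the R\"{o}dl nibble to produce a partial Steiner system $S(t-s+1,t,m)$, i.e.\ a family of $t$-subsets of $[m]$ with pairwise intersection at most $t-s$. Its pairwise symmetric differences automatically lie in $\{2s,2s+2,\ldots,2t\}$, which after blow-up become $\{sd,(s+1)d,\ldots,td\}=D$. The nibble yields size $(1-o(1))\binom{m}{t-s+1}/\binom{t}{t-s+1}$, matching the target. For the $d$ odd, $st$ even sub-case, set $s'=\lceil s/2\rceil$, $t'=\lfloor t/2\rfloor$, $m=n/d$, $k=d$; the parity hypothesis guarantees $\{2s',2s'+2,\ldots,2t'\}\subseteq\{s,s+1,\ldots,t\}$, and a partial Steiner system $S(t'-s'+1,t',m)$ works verbatim.

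For \ref{lower:odd}, both $s,t$ are odd, so $s'=(s+1)/2$, $t'=(t-1)/2$, and $D$ contains both the even values $\{2s',\ldots,2t'\}$ and the odd values $\{2s'-1,2s'+1,\ldots,2t'+1\}$. Setting $m=n/d-1$ and $M=\binom{m}{t'-s'+1}/\binom{t'}{t'-s'+1}$, I would produce two disjoint partial Steiner systems $\mathcal{G}_1,\mathcal{G}_2\subseteq\binom{[m]}{t'}$, each of size $(1-o(1))M$, satisfying the extra cross-intersection bound $|A\cap A'|\le t'-s'+1$ for all $A\in\mathcal{G}_1$, $A'\in\mathcal{G}_2$, and then form
\[
\mathcal{F}\ =\ \mathcal{G}_1\ \cup\ \bigl\{A'\cup\{m+1\}:A'\in\mathcal{G}_2\bigr\}\ \subseteq\ 2^{[m+1]}.
\]
Within either layer the Steiner property forces symmetric differences in $\{2s',\ldots,2t'\}\subseteq D$; across the layers the symmetric difference equals $2t'+1-2|A\cap A'|$, which lands in $\{2s'-1,2s'+1,\ldots,2t'+1\}=\{s,s+2,\ldots,t\}\subseteq D$ precisely because of the cross-intersection bound. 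Blowing up by $k=d$ then transports $\mathcal{F}$ to $[n]$ as a $D$-distance family of size $(2-o(1))M$, which is the claimed bound.

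I expect the main obstacle to be the simultaneous construction of $\mathcal{G}_1$ and $\mathcal{G}_2$: the naive choice $\mathcal{G}_1=\mathcal{G}_2$ already fails whenever $s>1$, since a block intersects itself in $t'>t'-s'+1$ elements. My plan is to run a \emph{coloured} R\"{o}dl-nibble argument on the conflict hypergraph whose vertices are coloured $t'$-subsets $(A,i)$ with $i\in\{1,2\}$, whose intra-colour conflict is ``$A,A'$ share a common $(t'-s'+1)$-set'' and whose inter-colour conflict is ``$A=A'$ or $A,A'$ share a common $(t'-s'+2)$-set''. Both conflict families are regular with vanishing relative co-degrees as $m\to\infty$, so the Pippenger--Spencer form of the nibble produces an almost-perfect matching, from which the desired pair $(\mathcal{G}_1,\mathcal{G}_2)$ is read off; this is the one step that goes beyond a black-box invocation of the standard single-layer nibble.
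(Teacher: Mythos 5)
Your parts~(i) and the framework of~(ii)---blow-up, the parameter translation $s'=\lceil s/2\rceil$, $t'=\lfloor t/2\rfloor$, the ``shift $\mathcal{G}_2$ by a new point'' trick, and the distance bookkeeping---match the paper exactly. The genuine gap is your ``coloured R\"odl-nibble'' step, which is precisely the one place the paper \emph{cannot} invoke a nibble. Your conflict structure on pairs $(A,i)$ has intra-colour conflicts of degree $\Theta(m^{s'-1})$ but inter-colour conflicts of degree $\Theta(m^{s'-2})$, so the two conflict relations live at different orders of magnitude and the combined object is neither near-regular nor a standard hypergraph matching instance. (Equivalently, in the shadow formulation you would add two colours of $(t'-s'+1)$-sets as goal vertices and uncoloured $(t'-s'+2)$-sets as ``do-not-double-cover'' vertices; the degrees of these two vertex types differ by a factor of $\Theta(m)$, so the almost-regularity hypothesis of Pippenger--Spencer fails outright, and the codegree of a $(t'-s'+2)$-vertex with a $(t'-s'+1)$-vertex is comparable to the former's degree, breaking the codegree condition too.) Your claim that ``both conflict families are regular with vanishing relative co-degrees'' elides exactly this mismatch. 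The nibble also finds matchings, not independent sets in conflict graphs, so even the framing needs to be repaired before any theorem can be cited.

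The paper's \Cref{lem:rodlvariant} handles this by first invoking Keevash's existence-of-designs theorem (\Cref{thm:design}) to obtain an \emph{exact} $(N,t',t'-s'+2)$-design, which makes the derived linear hypergraph on $(t'-s'+1)$-sets exactly $D$-regular with codegree at most one. It then applies the almost-perfect-matching theorem for near-regular linear hypergraphs (\Cref{thm:matching}) twice: once to extract $\cF_1$, and once more to the residual hypergraph, which has degrees in $\{D-1,D\}$ and is therefore still within the near-regularity window. This is exactly why they need a design rather than the almost-design the nibble would hand them: stripping a matching from an almost-design would not preserve near-regularity. If you want to salvage a nibble-only argument you would have to prove from scratch that the inter-colour conflicts are ``lower order'' and do not perturb the nibble trajectories---possibly doable, but it is a custom argument, not a black-box invocation, and you have not supplied it.
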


Inspired by \cite[Theorem 3.2]{huang_klurman_pohoata}, we are going to apply the following lemma in our constructions to establish \Cref{thm:homo_lower}\ref{lower:even}. This lemma is known as ``the R\"{o}dl nibble'' \cite{rodl}. (See also \cite[Section 4.7]{alon_spencer}.) 

\begin{lemma} \label{lem:rodl}
    Let $s, t \in \mathbb{N}$ with $1 \le s \le t$. As $m \to \infty$, there exists a $t$-uniform set family $\cF \subseteq 2^{[m]}$ of size $|\cF| = \bigl( 1 - o(1) \bigr) \cdot \frac{\binom{m}{t-s+1}}{\binom{t}{t-s+1}}$ such that $|A \cap B| \le t - s$ holds for any pair of distinct $A, B \in \cF$. 
\end{lemma}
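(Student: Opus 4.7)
The plan is to reduce the lemma to a classical near-perfect matching statement in hypergraphs and then invoke the R\"{o}dl nibble (equivalently, the Pippenger--Frankl--R\"{o}dl theorem). The key reformulation is that a $t$-uniform family $\cF \subseteq 2^{[m]}$ with pairwise intersections of size at most $t-s$ is exactly a family in which every $(t-s+1)$-element subset of $[m]$ is contained in at most one member of $\cF$. Equivalently, $\cF$ is a partial $(t-s+1,t,m)$-Steiner system, and the natural upper bound $\binom{m}{t-s+1}/\binom{t}{t-s+1}$ (obtained by double-counting $(t-s+1)$-subsets) is precisely the target we want to attain up to a $(1-o(1))$ factor.

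To set this up, I would define the auxiliary hypergraph $\cH$ on the vertex set $V \eqdef \binom{[m]}{t-s+1}$, with edge set
\[
E(\cH) \eqdef \Bigl\{ \tbinom{T}{t-s+1} : T \in \tbinom{[m]}{t} \Bigr\}.
\]
Then $\cH$ is $k$-uniform with $k = \binom{t}{t-s+1}$, and a matching in $\cH$ corresponds exactly to a family $\cF$ as sought in the lemma.

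Next I would verify the two standard hypotheses required for the nibble. For regularity, each $(t-s+1)$-subset $S \in V$ lies in an edge associated with $T$ precisely when $S \subseteq T$, so the degree of $S$ in $\cH$ equals $D \eqdef \binom{m-(t-s+1)}{s-1} = \Theta(m^{s-1})$, independent of $S$. For the codegree bound, any two distinct $S_1, S_2 \in V$ lie together in an edge iff $|S_1 \cup S_2| \le t$, in which case the number of $T \in \binom{[m]}{t}$ containing both is $\binom{m - |S_1 \cup S_2|}{t - |S_1 \cup S_2|}$. Since $|S_1 \cup S_2| \ge t-s+2$, this codegree is at most $\binom{m-t+s-2}{s-2} = O(m^{s-2}) = o(D)$ as $m \to \infty$.

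With these verifications, the Pippenger--Frankl--R\"{o}dl theorem yields a matching $\cM \subseteq E(\cH)$ of size $(1-o(1)) \cdot |V|/k = (1-o(1)) \cdot \binom{m}{t-s+1}/\binom{t}{t-s+1}$, and the corresponding family $\cF$ of $t$-subsets is precisely the family required by the lemma. I do not anticipate a substantial obstacle here, as the argument is a textbook application of the nibble; the only minor care is in the codegree calculation and in confirming that the extremal case $s=1$ (where the bound is trivially $\binom{m}{t}$) is handled correctly, which it is since then $D = 1$ and the hypergraph is already a matching.
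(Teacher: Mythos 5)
Your proposal is correct and is essentially the argument behind the cited result: the paper states Lemma 2.2 without a proof, attributing it to the R\"odl nibble with a pointer to Alon--Spencer \S4.7, and your reduction to an almost-perfect matching in the $\binom{t}{t-s+1}$-uniform auxiliary hypergraph on $\binom{[m]}{t-s+1}$, followed by the Pippenger--Frankl--R\"odl theorem, is precisely the standard derivation from those sources. The degree and codegree computations are right, and you correctly flag the degenerate case $s=1$ (uniformity $k=1$, where the nibble hypotheses do not apply but the family $\binom{[m]}{t}$ works outright).
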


\begin{proof}[Proof of \Cref{thm:homo_lower}\ref{lower:even}]
    We first look at the case when $d$ is even. Write $m \eqdef \lfloor 2n/d \rfloor$. Thanks to \Cref{lem:rodl}, we are able to find a $t$-uniform family $\cF' \subseteq 2^{[m]}$ of size $|\cF'| = \bigl(1-o(1)\bigr) \cdot \frac{\binom{m}{t-s+1}}{\binom{t}{t-s+1}}$
    such that $|A' \cap B'| \le t - s$ holds for any pair of distinct $A', B' \in \cF'$. This implies that
    \[
    |A' \bigtriangleup B'| \in \bigl\{ 2t, 2t-2, \dots, 2t-2(t-s) \bigr\} = \bigl\{ 2s, 2s+2, \dots, 2t \bigr\}. 
    \]
    Since $d$ is even, we can replace each element in $[m]$ by a unique group of $d/2$ distinct elements in $[n]$. This replacement then turns $\cF'$ into a $D$-distance family $\cF \subset 2^{[n]}$ of desired size. 

    If $d$ is odd, we replace $D$ by $\De$ and the same construction as above gives us a $\De$-distance (hence $D$-distance) family $\cF \subset 2^{[n]}$ of desired size. That is, we begin with a $\lfloor t/2\rfloor$-uniform family of size $\bigl(1-o(1)\bigr) \cdot \frac{\binom{m}{\lfloor t/2\rfloor-\lceil s/2\rceil+1}}{\binom{\lfloor t/2\rfloor}{\lfloor t/2\rfloor-\lceil s/2\rceil+1}}$ and pairwise intersection size at most $\lfloor t/2\rfloor - \lceil s/2\rceil$, and then blowup each element in $[m]$ to a unique group of size $d$. (We have $\lfloor t/2\rfloor \ge \lceil s/2\rceil$ since $st$ is even.) 
\end{proof}

To prove \Cref{thm:homo_lower}\ref{lower:odd}, we need the following extension of~\Cref{lem:rodl}.   

\begin{lemma} \label{lem:rodlvariant}
    Let $s', t' \in \mathbb{N}_+$ with $1 \le s' \le t' + 1$. As $m \to \infty$, there exist two $t'$-uniform families $\cF_1, \cF_2 \subseteq 2^{[m]}$, each of size $\bigl(1-o(1)\bigr) \cdot \frac{\binom{m}{t'-s'+1}}{\binom{t'}{t'-s'+1}}$ such that
    \vspace{-0.5em}
    \begin{itemize}
        \item $|A \cap B| \le t' - s'$ holds for any pair of distinct $A, B \in \cF_1$ or $A, B \in \cF_2$, and 
        \vspace{-0.5em}
        \item $|A \cap B| \le t' - s' + 1$ holds for any pair of $A \in \cF_1, \, B \in \cF_2$. 
    \end{itemize}
    \vspace{-0.5em}
\end{lemma}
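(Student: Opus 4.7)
My plan is to apply Lemma 2.1 twice in succession. In the first stage, I invoke Lemma 2.1 with parameters $s = s'$ and $t = t'$, giving a $t'$-uniform family $\cF_1 \subseteq 2^{[m]}$ of the required size with pairwise intersections at most $t' - s'$. In the second stage, I condition on $\cF_1$ and look for $\cF_2$ inside the collection of ``safe'' $t'$-subsets $\cX := \bigl\{ B \in \binom{[m]}{t'} : |A \cap B| \le t' - s' + 1 \text{ for every } A \in \cF_1 \bigr\}$; any $t'$-uniform family $\cF_2 \subseteq \cX$ with internal pairwise intersections at most $t' - s'$ automatically satisfies every condition of the lemma.

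To produce such an $\cF_2$ of the required size, I consider the $\binom{t'}{t'-s'+1}$-uniform auxiliary hypergraph $\cH^*$ on the vertex set $\binom{[m]}{t'-s'+1}$, whose edges $e_B := \bigl\{ T : T \subseteq B, \, |T| = t'-s'+1 \bigr\}$ are indexed by $B \in \cX$. A matching of size $(1-o(1)) \binom{m}{t'-s'+1}/\binom{t'}{t'-s'+1}$ in $\cH^*$ gives the desired $\cF_2$, so I apply the R\"{o}dl nibble / Pippenger--Spencer theorem to $\cH^*$. Writing $D := \binom{m-t'+s'-1}{s'-1}$, two distinct vertices $T \ne T'$ have codegree at most $\binom{m - |T \cup T'|}{t' - |T \cup T'|} \le \binom{m-t'+s'-2}{s'-2} = o(D)$, as required.

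The key step is then to verify the degree condition: for a $(1-o(1))$ fraction of vertices $T$, I claim the degree of $T$ in $\cH^*$ is $(1-o(1))D$; equivalently, the number of \emph{unsafe} $B \supseteq T$ is $o(D)$. For each fixed $A \in \cF_1$, the count of $B \in \binom{[m]}{t'}$ with $|A \cap B| \ge t' - s' + 2$ equals $\sum_{k \ge t'-s'+2} \binom{t'}{k}\binom{m-t'}{t'-k} = O(m^{s'-2})$. Summing over $|\cF_1| = O(m^{t'-s'+1})$ gives an overall count of $O(m^{t'-1})$ unsafe $B$, and since each unsafe $B$ contains $\binom{t'}{t'-s'+1} = O(1)$ many $T$-vertices, the total $\sum_T (\text{unsafe count at } T)$ is $O(m^{t'-1})$; dividing by the $\Theta(m^{t'-s'+1})$ vertices yields an average of $O(m^{s'-2}) = o(D)$. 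Markov's inequality then upgrades this average bound to $o(D)$ for a $(1-o(1))$ fraction of vertices. Restricting $\cH^*$ to these ``good'' vertices (and to the edges contained in them) leaves a near-regular hypergraph of almost full size, and Pippenger--Spencer yields a matching of size $(1-o(1))\binom{m}{t'-s'+1}/\binom{t'}{t'-s'+1}$. The main obstacle is cleanly handling the small set of low-degree vertices (and the smaller set of edges incident to several of them); I expect to circumvent it by one further averaging plus Markov step before invoking Pippenger--Spencer on the good part.
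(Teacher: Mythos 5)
Your approach is genuinely different from the paper's and, if the final step is patched, gives a more elementary proof. The paper works in the reverse direction: it invokes Keevash's existence of designs to construct an exactly $D$-regular \emph{linear} auxiliary hypergraph (whose linearity automatically enforces the cross-intersection bound), extracts $\cF_1$ as an almost-perfect matching via the Kostochka--R\"{o}dl theorem, strips that matching off, and extracts $\cF_2$ from the leftover, which is still almost regular (degrees in $\{D-1, D\}$). You instead build $\cF_1$ directly by \Cref{lem:rodl}, then restrict to ``safe'' $t'$-sets and hunt for $\cF_2$ inside this restricted hypergraph $\cH^*$. Your codegree estimate and the average-degree computation (average loss $O(m^{s'-2}) = o(D)$ since $D = \Theta(m^{s'-1})$ for $s' \ge 2$) are correct, and the plan has the attraction of bypassing Keevash's theorem entirely.

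However, the final step as you describe it does not quite close. After deleting the low-degree vertices and keeping only edges fully inside the good set, a good vertex $v$ can still lose many of its edges: those edges through $v$ touching a deleted vertex. Bounding this loss on average and applying Markov again produces a new, \emph{larger} bad set (of order $\sqrt{\varepsilon}$ rather than $\varepsilon$), so iterating one more time as you propose does not converge to an almost-regular hypergraph. The clean fix is to not aim for almost-regularity at all: the Pippenger--Spencer chromatic-index theorem requires only $\Delta(\cH^*) \le D$ and codegree $o(D)$ (no lower bound on minimum degree) and yields $\chi'(\cH^*) \le (1+o(1))D$. Since $|E(\cH^*)| = (1-o(1))\binom{m}{t'}$ and $\binom{m}{t'}/D = \binom{m}{t'-s'+1}/\binom{t'}{t'-s'+1}$ (a direct binomial identity), pigeonhole over the color classes gives a matching of exactly the claimed size, and your degree/Markov computation becomes unnecessary. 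You should also record the boundary cases $s' = 1$ and $s' = t'+1$ separately (as the paper does), since for $s' = 1$ there are no unsafe $B$ at all and for $s' = t'+1$ the answer is a pair of disjoint singletons.
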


We remark that one can force $\cF_1$ and $\cF_2$ to be disjoint in \Cref{lem:rodlvariant}. In fact, the second condition directly shows the disjointness, provided that $s' > 1$. However, this is unnecessary for our application. 

\begin{proof}[Proof of \Cref{thm:homo_lower}\ref{lower:odd} assuming \Cref{lem:rodlvariant}]
    Let $s' \eqdef \lceil s/2 \rceil, \, t' \eqdef \lfloor t/2 \rfloor$. Conversely, this implies that $s = 2s' - 1, \, t = 2t' + 1$. Define the parameter $m \eqdef \lfloor n/d \rfloor - 1$. 
    
    Let $\cF_1, \cF_2 \subseteq 2^{[m]}$ be two $t'$-uniform families obtained from \Cref{lem:rodlvariant}. Append a new element $m + 1$ to each set in $\cF_2$. Then replace each element in $[m+1]$ by a unique group of $d$ distinct elements in $[n]$. Let the resulting families be $\cF_1', \cF_2'$. For any distinct $A, B \in \cF_1'$ or $A, B \in \cF_2'$, we have
    \[
    |A \bigtriangleup B| \in d \cdot \bigl\{ 2t', 2t'-2, \dots, 2t'-2(t'-s') \bigr\} = \bigl\{ (s+1)d, (s+2)d, \dots, (t-1)d \bigr\} \subseteq D. 
    \]
    Moreover, for any $A \in \cF_1', \, B \in \cF_2'$, we have
    \[
    |A \bigtriangleup B| \in d \cdot \bigl\{ 2t'+1, 2t'-1, \dots, 2t'+1-2(t'-s'+1) \bigr\} = \bigl\{ sd, (s+1)d, \dots, td \bigr\} = D. 
    \]
    Since $\cF_1'$ and $\cF_2'$ are disjoint (due to element $m + 1$), we see that $\cF_1' \cup \cF_2'$ is a $D$-distance family with
    \[
    |\cF_1' \cup \cF_2'| = |\cF_1'| + |\cF_2'| = |\cF_1| + |\cF_2| = \bigl(2 - o(1)\bigr) \cdot \frac{\binom{n/d}{\lfloor t/2 \rfloor - \lceil s/2 \rceil + 1}}{\binom{\lfloor t/2 \rfloor}{\lfloor t/2 \rfloor - \lceil s/2 \rceil + 1}}. \qedhere
    \]
\end{proof}

We are left to establish \Cref{lem:rodlvariant}. The R\"{o}dl nibble (\Cref{lem:rodl}) already implies the existence of $\cF_1$ and $\cF_2$ separately. However, it is difficult to control the intersections between sets in $\cF_1, \cF_2$. To achieve this, we need tools from combinatorial design theory. 

An $(N, q, r)$-\emph{design} is a family $\cS$ of $q$-subsets of $[N]$ in which every $r$-subset of $[N]$ appears as a subset of exactly one set in $\cS$. By considering the codegree of an $i$-subset for $i = 0, 1, \dots, r - 1$, an obvious necessary condition for the existence of an $(N, q, r)$-design is the \emph{divisibility condition}: 
\[
\text{$\tbinom{q-i}{r-i}$ divides $\tbinom{N-i}{r-i}$ for $i = 0, 1, \dots, r-1$}. 
\]
A profound result due to Keevash~\cite{keevash2014} states that this divisibility condition is sufficient for large $N$. 

\begin{theorem} \label{thm:design}
    Let $q, r$ be positive integers with $q \ge r$. For every sufficiently large positive integer $N$ satisfying the divisibility condition, an $(N, q, r)$-design exists. 
\end{theorem}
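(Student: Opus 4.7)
The plan is to combine a random greedy (nibble) process with an absorption argument, which is the structural skeleton of Keevash's original proof as well as the iterative-absorption proof of Glock--K\"uhn--Lo--Osthus. The two key ingredients are a near-complete partial design produced by a nibble and a pre-reserved absorber subfamily that can swallow any divisibility-compatible leftover.

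First, I would encode the design existence problem as a perfect matching problem in the auxiliary $\binom{q}{r}$-uniform hypergraph $\mathcal{H}$ whose vertex set is $\binom{[N]}{r}$ and whose hyperedges are the $r$-shadows $\binom{Q}{r}$ for $Q \in \binom{[N]}{q}$. Because $\mathcal{H}$ is regular and has small codegrees relative to its degree, a R\"odl--Pippenger-style semi-random matching process (or an iterated nibble) produces a partial design $\mathcal{N} \subseteq \binom{[N]}{q}$ that covers all but an $\varepsilon$-fraction of the $r$-subsets, with strong pseudorandomness: the uncovered leftover $L \subseteq \binom{[N]}{r}$ has near-uniform degrees on small vertex sets, and its $r$-set codegrees remain bounded. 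This step uses no divisibility assumption and is essentially classical.

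Second, before running the nibble, I would reserve a carefully engineered absorber family $\mathcal{A} \subseteq \binom{[N]}{q}$ with the following flexibility property: for every leftover $L$ that, together with the already-covered part, is locally divisibility-compatible (i.e. for each $i < r$ and each $i$-set $I$, the number of $r$-supersets of $I$ in $L$ is a multiple of $\binom{q-i}{r-i}$), there exists a subfamily $\mathcal{A}' \subseteq \mathcal{A}$ and a replacement $\mathcal{A}'' \subseteq \binom{[N]}{q}$ such that swapping $\mathcal{A}'$ for $\mathcal{A}''$ exactly covers $L$. Typically $\mathcal{A}$ is built by distributing many localized "switching gadgets"---small configurations, often produced from algebraic templates over finite fields or from products of near-designs on bounded ground sets---whose ability to locally rewrite the design is proved in advance. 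The divisibility condition on $N$ is inherited by the residue after subtracting $\mathcal{N}$ and the reserved $\mathcal{A}$, which is precisely what guarantees the leftover is admissible for the absorber.

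Finally, I would compose the two phases: reserve $\mathcal{A}$, run the nibble on $\mathcal{H}$ minus the $r$-sets covered by $\mathcal{A}$ (with appropriate degree-regularization) to obtain $\mathcal{N}$, and then apply the absorber to the admissible leftover $L$. The output $\mathcal{N} \cup (\mathcal{A} \setminus \mathcal{A}') \cup \mathcal{A}''$ is the required $(N,q,r)$-design. By a wide margin, the hard part is constructing the absorber: one needs a single family that is simultaneously sparse enough not to destroy the nibble's pseudorandomness, structured enough that it admits switchings, and universal enough to handle every divisibility-legal residue. This is exactly where the $\approx 100$ pages of Keevash's proof are spent, and any serious attempt would have to develop the algebraic template and boosting machinery in detail.
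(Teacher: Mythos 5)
The paper does not prove this theorem at all: it cites it as a black box from Keevash (reference [keevash2014]) and notes that alternative proofs appear in Glock--K\"uhn--Lo--Osthus, Delcourt--Postle, and Keevash's later work. So there is no ``paper's own proof'' to compare against; the result is imported wholesale.

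Your sketch correctly identifies the high-level architecture common to Keevash's original proof and the iterative-absorption proofs: reduce to near-perfect matching in the auxiliary $\binom{q}{r}$-uniform hypergraph on $\binom{[N]}{r}$, run a R\"odl nibble to cover almost everything, and pre-reserve an absorber to handle the divisibility-compatible leftover. The reduction to a matching problem, the use of pseudorandomness of the nibble's output, and the observation that the divisibility condition is exactly what makes the leftover admissible are all accurately stated. However, what you have written is an outline, not a proof, and you say so yourself. The entire technical content of the theorem lives in the absorber: one must actually exhibit a single family of $q$-sets that is simultaneously sparse enough not to perturb the nibble, locally switchable, and universal over all divisibility-legal residues. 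In Keevash's approach this requires the algebraic ``template'' construction over finite fields together with the ``boosting'' and ``randomised algebraic construction'' machinery; in Glock--K\"uhn--Lo--Osthus it requires the iterative absorption framework with ``vortices'' and ``cover-down'' lemmas. None of that is constructed or even precisely specified in your proposal. As it stands, this is a correct roadmap with the destination left unreached, which is a genuine gap; it cannot be closed by a short argument, and the paper is right to treat the theorem as a citation rather than something to reprove.
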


We remark that alternative and simpler proofs of \Cref{thm:design} can be found in \cite{glock_kuhn_lo_osthus, delcourt_postle, keevash2024}. 

\smallskip

To prove \Cref{lem:rodlvariant}, the idea is to begin with an $(N, t', t'-s'+2)$-design, and then extract $\cF_1, \cF_2$ successively from it. Recall that a hypergraph is \emph{linear} if any pair of its edges intersects in at most one vertex, and a \emph{matching} in a hypergraph is a set of vertex-disjoint hyperedges. 

Informally, the following technical result from \cite{alon_kim_spencer, kostochka_rodl} says that an almost perfect matching always exists in a uniform almost regular linear hypergraph. 

\begin{theorem} \label{thm:matching}
    For any positive integers $\ell \ge 3$ and $K$, there exists some $D_0 = D_0(\ell, K)$ such that the following property holds: Suppose $D \ge D_0$ and let $\cH = (V, \cE)$ be an $\ell$-uniform linear hypergraph with $D - K\sqrt{D \log D} \le \deg_{\cH}(v) \le D$ for all vertices $v \in V$. Then as $|V| \to \infty$, there exists a matching covering all but at most $O\bigl(|V|D^{-1/(\ell-1)}\log^{3/2}D\bigr)$ vertices.
\end{theorem}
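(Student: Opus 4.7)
The plan is to apply the \emph{R\"odl nibble} method iteratively: we build the near-perfect matching by running roughly $T \approx \frac{\log D}{\veps(\ell-1)}$ rounds of a randomized ``nibble'', each of which extracts a small sub-matching and removes its vertices. Fix a small constant $\veps > 0$ (depending on $\ell$). Given the current linear hypergraph $\cH_i = (V_i, \cE_i)$, which will have degrees very close to some $D_i$, I would include each edge of $\cE_i$ independently with probability $p_i \eqdef \veps / D_i$, then discard every selected edge that shares a vertex with some other selected edge. The survivors form a matching $M_i$; set $V_{i+1} \eqdef V_i \setminus V(M_i)$ and $\cE_{i+1} \eqdef \{e \in \cE_i : e \subseteq V_{i+1}\}$, and iterate. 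The output is $\bigcup_i M_i$, which is a matching in $\cH$ by construction, and the uncovered vertex set is exactly $V_T$.

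The per-round computation relies crucially on linearity of $\cH_i$: any two edges through a common vertex $v$ are otherwise disjoint, so the relevant edge-selection events are nearly independent. Standard second-moment estimates then give $\E |V_{i+1}| = |V_i|\cdot (1 - p_i)^{D_i} \bigl(1 + O(\veps^2)\bigr)$ and $\E \deg_{\cH_{i+1}}(v) = D_i \cdot (1 - p_i)^{(\ell-1)D_i} \bigl(1 + O(\veps^2)\bigr)$ for every surviving vertex $v$. Plugging $p_i D_i = \veps$ into the recursion yields $|V_T| \approx |V| \cdot e^{-\veps T}$ and $D_T \approx D \cdot e^{-\veps(\ell-1) T}$, so choosing $T$ so that $D_T$ is bounded by a constant $D_\star(\ell,\veps)$ forces $T = \Theta\bigl(\frac{\log D}{\ell-1}\bigr)$ and, consequently, $|V_T| = O\bigl(|V|\cdot D^{-1/(\ell-1)}\bigr)$. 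The initial slackness $K\sqrt{D\log D}$ in the degree hypothesis is absorbed into the $O(\veps^2)$-type error terms above, which is why $K$ only affects $D_0$.

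To upgrade these expectations into high-probability guarantees that simultaneously track $\deg_{\cH_{i+1}}(v)$ for every vertex $v$ and every round $i$, the main technical input is concentration, which one obtains either from a Kim--Vu polynomial inequality or from a bounded-differences martingale set up by revealing the edge-selection indicators one at a time. Linearity again makes each indicator influence each tracked quantity by only $O(1)$, so each round contributes a deviation of order $\sqrt{D_i \log(|V|/\delta)}$ per vertex. The main obstacle is aggregating these fluctuations across the $T$ rounds and $|V|$ vertices while keeping the \emph{multiplicative} error on the degrees controlled, so that the inductive hypothesis ``degrees are within a factor $1 + o(1)$ of $D_i$'' survives all $T$ rounds; one balances this by choosing $\delta = |V|^{-\omega(1)}$ and invoking a careful union bound, and this is precisely where the polylogarithmic overhead $\log^{3/2} D$ enters the final estimate. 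After $T$ rounds, leave the remaining $O(|V|\cdot D^{-1/(\ell-1)})$ vertices uncovered; adding in the concentration slack gives the claimed bound $O\bigl(|V|\cdot D^{-1/(\ell-1)} \log^{3/2} D\bigr)$ on $|V_T|$, completing the proof.
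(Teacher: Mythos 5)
The paper does not prove \Cref{thm:matching} at all; it is cited verbatim from Alon--Kim--Spencer and Kostochka--R\"odl, so there is no ``paper's proof'' to compare against. Your nibble sketch is in the spirit of those references, but as written it has a quantitative gap that is exactly the crux of those papers.

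The main problem is the error bookkeeping. You claim each round perturbs $|V_i|$ and $D_i$ by a multiplicative $\bigl(1+O(\veps^2)\bigr)$ and that the $K\sqrt{D\log D}$ initial slack is ``absorbed'' into these terms. But you take $\veps$ to be a fixed small constant and run $T = \Theta\bigl(\tfrac{\log D}{\veps(\ell-1)}\bigr)$ rounds, so the compounded multiplicative error is $\bigl(1+O(\veps^2)\bigr)^T = e^{O(\veps\log D/(\ell-1))} = D^{O(\veps/(\ell-1))}$. That is a \emph{polynomial} loss in $D$, not the polylogarithmic $\log^{3/2}D$ overhead in the statement, so the argument as given only yields $|V_T| = O\bigl(|V|\cdot D^{-1/(\ell-1)+O(\veps)}\bigr)$. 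To beat this, one cannot simply fix $\veps$; one needs either a shrinking nibble probability together with a much sharper (additive, not just multiplicative) control of how the degree distribution evolves, or the kind of tailor-made concentration and coupling argument that Alon--Kim--Spencer carry out. Relatedly, ``each round contributes a deviation of order $\sqrt{D_i\log(|V|/\delta)}$'' and ``choose $\delta = |V|^{-\omega(1)}$'' do not obviously combine into a $1+o(1)$ relative error after $T$ rounds: the relative per-round deviation $\sqrt{\log(|V|/\delta)/D_i}$ is dominated by the late rounds when $D_i$ is small, and the sum does not telescope into something bounded without further ideas (a truncation threshold for $D_i$, plus a separate cleanup/greedy phase for the low-degree residual hypergraph). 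None of this is addressed, and it is precisely where the $\log^{3/2}D$ comes from in the cited works. As a pointer to the right method the sketch is fine, but as a proof it stops short of the part that makes the theorem true with the stated bound.
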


Due to the almost regularity restriction $D - K\sqrt{D \log D} \le \deg_{\cH}(x) \le D$ in \Cref{thm:matching}, we have to find a design (using the deep result \Cref{thm:design}) instead of an almost design (via \Cref{lem:rodl}). 

\begin{proof}[Proof of \Cref{lem:rodlvariant}]
    When $s' = 1$, upon setting $\cF_1 = \cF_2 \eqdef \binom{[m]}{t'}$ we are done. If $s' = t' + 1$, then we can choose two single-set families supported on distinct elements $\cF_1 = [t'], \, \cF_2 = \{t'+1,\ldots,2t'\}$. 
    
    Assume $t'\ge s' \ge 2$ then. Pick the largest integer $N \le m$ satisfying $N \equiv q \pmod Q$, where 
    \[
    \textstyle Q \eqdef r! \cdot \prod\limits_{i=0}^{r-1} \binom{q-i}{r-i}, \qquad q \eqdef t', \qquad r \eqdef t' - s' + 2. 
    \]
    Since $s', t'$ (hence $q, r$) are fixed, we have $m - N = O(1)$. For $i = 0, 1, \dots, r - 1$, we compute
    \[
    (r-i)! \cdot \tbinom{N-i}{r-i} = (N-i) \cdots (N-r+1) \equiv (q-i) \cdots (q-r+1) = (r-i)! \cdot \tbinom{q-i}{r-i} \pmod{Q}. 
    \]
    Since $(r-i)! \cdot \binom{q-i}{r-i}$ divides $Q$, we have $\binom{q-i}{r-i}$ divides $\binom{N-i}{r-i}$. So, $N$ satisfies the divisibility condition. Due to \Cref{thm:design}, we can find an $(N, t', t'-s'+2)$-design $\mathcal{S}$ on $[N]$ when $m$ is sufficiently large. 

    Construct an $\ell$-uniform linear hypergraph $\cH = (V, \cE)$ of uniformity $\ell = \binom{t'}{t'-s'+1}$ as follows. Let $V$ be the set of $(t'-s'+1)$-subsets of $[N]$. For each $A \in \mathcal{S}$, include a hyperedge $\{B \in V \colon B \subseteq A\}$ into $\cE$. Since each $(t'-s'+2)$-subset of $[N]$ is contained in exactly one $t'$-subset in $\mathcal{S}$, the hypergraph $\cH$ is linear and $D$-regular, where $D = \frac{N-(t'-s'+1)}{t'-(t'-s'+1)}$. By \Cref{thm:matching}, since $D = \Omega(N)$ and $D_0 = O(1)$, there exists a matching $\mathcal{M}$ in the hypergraph $\mathcal{H}$ covering $\bigl(1 - o(1)\bigr)$-proportion of its vertices. Phrasing differently, this matching $\cM$ misses $o \Bigl( \binom{N}{t'-s'+1} \Bigr)$ vertices. Take $\cF_1$ to be the set of $t'$-sets in $\mathcal{S}$ which corresponds to the hyperedges in $\mathcal{M}$. 

    Consider the $\ell$-uniform linear hypergraph $\cH' = (V, \cE')$ for $\cE' \eqdef \cE \setminus \cM$. This is the hypergraph obtained by stripping a matching from the $D$-regular hypergraph $\mathcal{H}$. It follows that the degree of each vertex in $\mathcal{H}'$ is either $D-1$ or $D$. From \Cref{thm:matching} we deduce that $\mathcal{H}'$ also admits an almost perfect matching. Take $\cF_2$ to be the set of $t'$-sets in $\mathcal{S}$ corresponding to this matching. 

    For any pair of distinct sets $A, B \in \cF_1$ or $A, B \in \cF_2$, since their corresponding hyperedges in $\cH$ are vertex-disjoint, we have $|A \cap B| \le t' - s'$. Also, for any pair of $A \in \cF_1$ and $B \in \cF_2$, we have $|A \cap B| \le t'-s'+1$ as $\cH$ is linear. Since $m - N = O(1)$, the families $\cF_1, \cF_2$ meet all requirements in \Cref{lem:rodlvariant}, and hence the proof is complete. 
\end{proof}

\section{Proof of the upper bounds in \texorpdfstring{\Cref{thm:homo}}{Theorem 1.3}} \label{sec:homo_upper}

In this section, we prove the following result which implies the upper bounds in \Cref{thm:homo}. 

\begin{theorem} \label{thm:homo_upper}
    Suppose $d, s, t$ are positive integers with $1 \le s \le t$. Let $D = \bigl\{ sd, (s+1)d, \dots, td \bigr\}$ be a homogeneous arithmetic progression, and $\cF \subseteq 2^{[n]}$ be a $D$-distance family. As $n \to \infty$, we have
    \[
    |\cF| \le \begin{cases}
        \bigl(1 + o(1)\bigr) \cdot \prod\limits_{\ell \in \De} \frac{2n}{\ell} \qquad &\text{if $dst$ is even}, \\
        \bigl(2 + o(1)\bigr) \cdot \prod\limits_{\ell \in \De} \frac{2n}{\ell} \qquad &\text{if $dst$ is odd}. 
    \end{cases}
    \]
\end{theorem}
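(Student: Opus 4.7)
The plan is to translate the restricted-distance problem into restricted-intersection problems on a small number of slices of the hypercube and then apply a Frankl--Wilson-type inequality tuned to the arithmetic-progression structure of $\De$.

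As a preprocessing step, pick any fixed $F_0 \in \cF$ and replace $\cF$ with $\{F_0 \triangle F : F \in \cF\}$; since symmetric differences preserve pairwise distances, this lets us assume $\varnothing \in \cF$. Every other $F$ then satisfies $|F| \in D$, so $\cF$ sits on the $t - s + 2$ slices of sizes $0, sd, (s+1)d, \dots, td$. The next structural step, corresponding to the observations flagged in the introduction, is to show that at least a $(1-o(1))$-fraction of $\cF$ concentrates on at most two of these slices. The key input is that any cross-slice pair $A \in \binom{[n]}{k_1}$, $B \in \binom{[n]}{k_2}$ with $k_1 \ne k_2$ satisfies $|A \triangle B| = k_1 + k_2 - 2|A \cap B| \in D$, and this constraint combined with the within-slice restricted-intersection pattern forbids $\cF$ from being simultaneously dense on three or more slices.

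Within one slice $\binom{[n]}{k}$, the condition $|A \triangle B| \in \De$ translates to $|A \cap B| \in k - \tfrac{1}{2}\De$, a set of $|\De|$ intersection values forming an arithmetic progression of common difference $d/2$ (for $d$ even) or $d$ (for $d$ odd). A direct Ray--Chaudhuri--Wilson bound gives only $\binom{n}{|\De|}$, which is off by a factor of $\binom{t}{|\De|}$. To close this gap and recover the target $\prod_{\ell \in \De} \tfrac{2n}{\ell}$, I would exploit the AP structure of the allowed intersections by grouping $[n]$ into blocks of size $d/2$ (resp.\ $d$), effectively reducing the ground set from $n$ to $2n/d$ (resp.\ $n/d$); this is the upper-bound analogue of the blow-up step used in the R\"odl-nibble construction of \Cref{sec:homo_lower}.

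Finally, I would combine the at-most-two slices. When $dst$ is odd, both slices may genuinely be large, yielding the factor $2$ in the statement. The subtle case flagged in the introduction is odd $dt$ with even $s$, where a naive two-slice sum would produce a spurious factor of $2$. To eliminate it, I would perform a cross-intersection analysis: study the common intersection of the sets in each of the two slices and show that the $D$-distance constraint across the two slices forces at least one of them to be asymptotically negligible. I expect the main obstacle of the proof to be precisely this cross-slice structural lemma, together with the sharpening from $\binom{n}{|\De|}$ to $\binom{2n/d}{|\De|}/\binom{t}{|\De|}$ on a single slice --- both demand genuinely combinatorial input beyond a direct polynomial-method count.
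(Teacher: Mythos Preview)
Your overall architecture matches the paper's: translate to restricted intersections on a few uniform slices, bound each slice sharply, and in the odd-$dt$/even-$s$ case do a cross-slice analysis via the common intersections (centers) of the two slices. However, two of your key steps do not work as written.

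First, the sharpening from $\binom{n}{|\De|}$ to $\prod_{\ell \in \De} \tfrac{2n}{\ell}$ on a single slice cannot be obtained by ``grouping $[n]$ into blocks of size $d/2$.'' That blow-up is a \emph{lower-bound} device: it works in \Cref{sec:homo_lower} because you get to design the family to be block-aligned. For the upper bound $\cF$ is arbitrary, its members have no reason to be unions of blocks, and there is no reduction to a ground set of size $2n/d$. The paper instead invokes the Deza--Erd\H{o}s--Frankl theorem (\Cref{thm:deza_erdos_frankl}): a $k$-uniform $\{\ell_1,\dots,\ell_r\}$-intersecting family has size at most $\prod_i \tfrac{n-\ell_i}{k-\ell_i}$. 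On the $k$-slice the intersection set is $k - \tfrac{1}{2}\De$, so each factor equals $\tfrac{n-(k-\ell/2)}{\ell/2} \le \tfrac{2n}{\ell}$, giving the target directly (\Cref{lem:uniform}). This product refinement of Ray--Chaudhuri--Wilson is the missing tool; its structural clause (a large family has a common core of size $\min L$) is also what supplies the ``common intersection'' you need in the final case.

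Second, your concentration step---that a $(1-o(1))$-fraction of $\cF$ lives on at most two slices---is asserted but not argued, and the vague appeal to ``cross-slice constraints'' does not obviously yield it. The paper achieves this by fixing not only $\varnothing \in \cF$ but also an anchor $[td] \in \cF$ realizing $\max D$, and splitting $\cF$ according to $|A \setminus [td]|$. Sets with $|A \setminus [td]| \ge \max(\De)/2$ are forced by the two constraints $|A| \le td$ and $|A \bigtriangleup [td]| \le td$ to satisfy $|A| \in \{td,(t-1)d\}$; the remaining sets, fibred over their trace on $[td]$, form $(D \setminus \{\max \De\})$-distance families and hence contribute only $O(n^{|\De|-1})$ by Frankl--Wilson. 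Without this anchor-set mechanism I do not see how to collapse $t-s+1$ slices down to two.
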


We remark that if $\De = \varnothing$ ($dst$ is odd and $s = t$), conventionally the empty product is $1$. 

Indeed, when $dst$ is odd, $\lfloor t/2 \rfloor = \frac{t-1}{2}, \, \lceil s/2 \rceil = \frac{s+1}{2}, \, |\De| = \lfloor t/2 \rfloor - \lceil s/2 \rceil + 1 = \frac{t-s}{2}$, and so
\[
\bigl(2 + o(1)\bigr) \cdot \prod_{\ell \in \De} \frac{2n}{\ell} = \bigl(2 + o(1)\bigr) \cdot \frac{(\frac{n}{d})^{\frac{t-s}{2}}}{\frac{s+1}{2} \cdot \frac{s+3}{2} \cdot \cdots \cdot \frac{t-1}{2}} = \bigl(2 + o(1)\bigr) \cdot \frac{\binom{n/d}{(t-s)/2}}{\binom{(t-1)/2}{(t-s)/2}},
\]
implying the upper bound in \Cref{thm:homo} in this case. Other cases are similar. 

\subsection{Distance families versus intersecting families}\label{sec:dist-vs-intersect}

A set family $\cF$ is $k$-\emph{uniform} if $|A| = k \, (\forall A \in \cF)$. For a set of non-negative integers $L$, we say that $\cF$ is an \emph{$L$-intersecting} family if $|A \cap B| \in L$ holds for each pair of distinct $A, B \in \cF$. A classical result concerning intersecting families is the Frankl--Wilson theorem below. 

\begin{theorem}[\cite{frankl_wilson}] \label{thm:frankl_wilson}
    Suppose $0 \le \ell_1 < \dots < \ell_r < k$ are integers and write $L \eqdef \{\ell_1, \dots, \ell_r\}$. If $\cF \subseteq 2^{[n]}$ is a $k$-uniform $L$-intersecting family, then $|\cF| \le \binom{n}{r}$. 
\end{theorem}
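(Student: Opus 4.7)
The plan is to apply the Babai--Frankl polynomial method: associate to each set in $\cF$ a polynomial of degree $r$ that vanishes on the characteristic vectors of the other members of $\cF$ but not on its own, deduce linear independence, and bound $|\cF|$ by the dimension of an ambient function space; the sharp bound $\binom{n}{r}$ will come from using the uniformity of $\cF$ to collapse lower-degree monomials into the top-degree layer. Concretely, for each $A \in \cF$ with characteristic vector $v_A \in \{0,1\}^n$, I set
\[
p_A(x_1, \ldots, x_n) \eqdef \prod_{i=1}^{r}\bigl(\langle v_A, x \rangle - \ell_i\bigr) \in \R[x_1, \ldots, x_n], \qquad \langle v_A, x \rangle = \sum_{j \in A} x_j,
\]
a polynomial of degree $r$. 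For $B \in \cF$, $p_A(v_B) = \prod_i (|A \cap B| - \ell_i)$, which equals $\prod_i(k - \ell_i) \ne 0$ when $B = A$ (since each $\ell_i < k$) and vanishes when $B \ne A$ (since $|A \cap B| \in L$). Under any fixed ordering of $\cF$, the matrix $\bigl(p_A(v_B)\bigr)_{A, B \in \cF}$ is triangular with nonzero diagonal, so $\{p_A\}_{A \in \cF}$ is linearly independent as a family of functions on $\{v_B : B \in \cF\}$.

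Since all evaluations happen on $\{0,1\}^n$, I multilinearize using $x_j^2 = x_j$ to obtain $\tilde p_A$ of degree at most $r$ whose values on $\cF$ agree with $p_A$. The naive ambient space of multilinear polynomials of degree $\le r$ has dimension $\sum_{i=0}^{r}\binom{n}{i}$, which would only yield the weaker non-uniform Frankl--Wilson bound. To sharpen to $\binom{n}{r}$, I exploit $\sum_j x_j = k$, which holds on every characteristic vector in $\cF$. The key claim is that modulo the relations $\sum_j x_j = k$ and $x_j^2 = x_j$, every multilinear monomial $x_S \eqdef \prod_{j \in S} x_j$ with $|S| < r$ is a linear combination of degree-$(|S|+1)$ monomials. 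Indeed, multiplying $\sum_j x_j = k$ by $x_S$ and expanding gives
\[
k\, x_S \;=\; |S|\, x_S + \sum_{j \notin S} x_{S \cup \{j\}}, \qquad \text{hence} \qquad x_S \;=\; \frac{1}{k - |S|}\sum_{j \notin S} x_{S \cup \{j\}},
\]
where the division by $k - |S|$ is legitimate since $|S| < r \le k$; here $r \le k$ follows from $\ell_1, \ldots, \ell_r$ being distinct integers in $\{0, 1, \ldots, k-1\}$. Iterating raises the degree of $x_S$ successively to $r$, so each $\tilde p_A$, as a function on $\{v_B : B \in \cF\}$, lies in the span of $\{x_T : |T| = r\}$, a space of dimension $\binom{n}{r}$. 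Combined with the linear independence above, this yields $|\cF| \le \binom{n}{r}$.

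The main obstacle is precisely the dimension-collapse step: naively the polynomial method outputs only $\sum_{i=0}^r \binom{n}{i}$, matching the non-uniform Frankl--Wilson theorem but missing the uniform bound by a significant factor. The essential ingredients that make the collapse work are the uniformity $|A| = k$ for every $A \in \cF$ and the inequality $r \le k$ forced by $\ell_1 < \cdots < \ell_r < k$; together they let the identity $\sum_j x_j = k$ systematically push every low-degree monomial into the top layer $\{x_T : |T| = r\}$.
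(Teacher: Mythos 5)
Your proof is correct. Note that the paper does not include a proof of this theorem---it is quoted verbatim as a black box (it is really the Ray-Chaudhuri--Wilson theorem, from the paper cited as \cite{frankl_wilson})---so there is no in-paper argument to compare against; I will only evaluate your argument on its own merits.

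Your degree-reduction computation is right: multilinearizing $x_S \cdot \sum_j x_j$ gives $|S| x_S + \sum_{j\notin S} x_{S\cup\{j\}}$, and on the $k$-slice $\sum_j x_j \equiv k$, so $x_S = \tfrac{1}{k-|S|}\sum_{j\notin S} x_{S\cup\{j\}}$ as a function on $\binom{[n]}{k}$; the denominator is nonzero because $|S|<r\le k$ (with $r\le k$ forced by $\ell_1<\dots<\ell_r<k$ being $r$ distinct nonnegative integers below $k$). Iterating expresses every monomial of degree $<r$ in the span of $\{x_T:|T|=r\}$ on the $k$-slice, so each $\tilde p_A$, restricted to the characteristic vectors of $\cF$, lives in a space of dimension at most $\binom{n}{r}$. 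Combined with the fact that $\bigl(p_A(v_B)\bigr)_{A,B}$ is a diagonal matrix with nonzero diagonal (you wrote ``triangular,'' but it is in fact diagonal, which is even stronger), linear independence of the $\tilde p_A$ gives $|\cF|\le\binom{n}{r}$. This is a clean variant of the familiar polynomial proof: the textbook route (Alon--Babai--Suzuki; Babai--Frankl) gets the same bound by \emph{adding} the auxiliary polynomials $x_I\bigl(\sum_j x_j-k\bigr)$, $|I|\le r-1$, to the family and proving that the enlarged collection is linearly independent in the $\sum_{i\le r}\binom{n}{i}$-dimensional space of multilinear degree-$\le r$ polynomials, whereas you instead \emph{shrink} the ambient space to the top layer. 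The two are essentially dual; yours has the advantage of being a single dimension count with no extra independence check for the auxiliary polynomials.
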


Let $\cF$ be a $D$-distance family. By flipping the belonging relation of a single number in $[n]$ to each set from $\cF$ simultaneously, we may assume without loss of generality that $\varnothing \in \cF$. It follows that $|A| = |A \bigtriangleup \varnothing| \in D$ holds for all $A \in \cF \setminus \{\varnothing\}$. Partition $\cF \setminus \varnothing$ into a disjoint union of $\cF_d$ for $d \in D$. From the fact $2|A \cap B| = |A| + |B| - |A \bigtriangleup B|$ we deduce that $\cF_d$ is an $L_d$-intersecting family, where the set of  intersection sizes is $L_d \eqdef d - \frac{\De}{2} = \bigl\{d - \frac{k}{2} \colon k \in \De\bigr\}$. After dropping negative integers in $L_d$, the Frankl--Wilson theorem (\Cref{thm:frankl_wilson}) directly shows the following crude upper bound. 

\begin{corollary} \label{coro:distance_weak}
    If $\cF \subseteq 2^{[n]}$ is a $D$-distance family, then $|\cF| \le |D| \cdot n^{|\De|}$. 
\end{corollary}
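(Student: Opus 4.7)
The plan is to apply the Frankl--Wilson theorem slicewise to the decomposition already isolated in Section~\ref{sec:dist-vs-intersect}. First, I would flip the belonging relation of a fixed element of $[n]$ in every set of $\cF$ simultaneously; this operation preserves all pairwise symmetric differences, so I may assume $\varnothing \in \cF$. The constraint $|A| = |A \bigtriangleup \varnothing| \in D$ for every non-empty $A \in \cF$ then yields the level-set decomposition
\[
\cF \setminus \{\varnothing\} = \bigsqcup_{d \in D} \cF_d, \qquad \cF_d \subseteq \tbinom{[n]}{d}.
\]

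Fix $d \in D$ and analyse $\cF_d$. Uniformity forces $|A \bigtriangleup B| = 2d - 2|A \cap B|$ to be even for any distinct $A, B \in \cF_d$, so actually $|A \bigtriangleup B| \in \De$. The identity $|A \cap B| = d - |A \bigtriangleup B|/2$ then shows that $\cF_d$ is $L_d$-intersecting for some $L_d \subseteq \bigl\{ d - k/2 : k \in \De \bigr\}$. Discarding negative entries (and the entry $d$ itself, if present) leaves at most $|\De|$ non-negative integers strictly below $d$, matching the hypotheses of Theorem~\ref{thm:frankl_wilson}. Frankl--Wilson then yields $|\cF_d| \le \binom{n}{|\De|}$.

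Summing over the $|D|$ slices, I would conclude
\[
|\cF| \le 1 + \sum_{d \in D} |\cF_d| \le 1 + |D| \cdot \binom{n}{|\De|},
\]
which is at most $|D| \cdot n^{|\De|}$ once $n$ is sufficiently large (and in any case has the claimed asymptotic shape). I do not anticipate any genuine obstacle: the reduction from distances to intersections is already carried out in the paragraph preceding the corollary, and the only point worth verifying is that every element of $\De$ is strictly positive (guaranteeing $d - k/2 < d$ so that the trimmed $L_d$ satisfies $\ell_r < d$), which holds since $D \subseteq \mathbb{N}_+$.
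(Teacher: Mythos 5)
Your proof is correct and mirrors the paper's argument exactly: flip so that $\varnothing \in \cF$, decompose $\cF \setminus \{\varnothing\}$ into the uniform slices $\cF_d$, observe that each $\cF_d$ is $L_d$-intersecting for $L_d = d - \tfrac{\De}{2}$ trimmed to $\{0,\dots,d-1\}$, and apply Frankl--Wilson slicewise. The only cosmetic difference is that you explicitly note the harmless slack of $+1$ from $\varnothing$ and that the trimmed $L_d$ sits strictly below $d$; the paper states this bound as a ``crude'' consequence without dwelling on those constants, which do not affect any downstream use of the corollary.
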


One can see that the order of magnitude in \Cref{coro:distance_weak} already matches that from the upper bounds in \Cref{thm:homo}, and the most interesting part in our main result is to find the sharp leading coefficient. The following result proved by Deza, Erd\H{o}s and Frankl \cite{deza_erdos_frankl} can be viewed as a refinement of the Frankl--Wilson theorem. This is crucial for us to establish the upper bounds in \Cref{thm:homo}. Instead of the original statement, here we state a slightly weaker version. 

\begin{theorem}[{\cite[Theorem 18.1]{frankl_tokushige}}] \label{thm:deza_erdos_frankl}
    Let $0 \le \ell_1 < \dots < \ell_r < k$ and $n \ge 2^k k^3$ be integers and write $L \eqdef \{\ell_1, \dots, \ell_r\}$. If $\cF \subseteq 2^{[n]}$ is a $k$-uniform $L$-intersecting family, then $|\cF| \le \prod_{i=1}^r \frac{n-\ell_i}{k-\ell_i}$. Moreover, if $|\cF| \ge 2^k k^2 n^{r-1}$, then there exists $C \subseteq [n]$ with $|C| = \ell_1$ such that $C$ is contained in every set in $\cF$. 
\end{theorem}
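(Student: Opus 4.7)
The plan is to prove both conclusions in tandem by induction on $r$. The base case $r = 0$ is immediate: $L = \varnothing$ forces $|\cF| \le 1$, matching the empty product $1$.

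\textbf{Inductive reduction via a common core.} For $r \ge 1$, once we locate an $\ell_1$-subset $C$ with $C \subseteq A$ for every $A \in \cF$, the bijection $A \mapsto A \setminus C$ sends $\cF$ to a $(k-\ell_1)$-uniform family $\cF'$ on $[n] \setminus C$ that is $L'$-intersecting for $L' = \{\ell_2-\ell_1, \ldots, \ell_r-\ell_1\}$ of size $r-1$. The induction hypothesis then gives
\[
|\cF| = |\cF'| \le \prod_{i=2}^r \frac{(n-\ell_1)-(\ell_i-\ell_1)}{(k-\ell_1)-(\ell_i-\ell_1)} = \prod_{i=2}^r \frac{n-\ell_i}{k-\ell_i} \le \prod_{i=1}^r \frac{n-\ell_i}{k-\ell_i},
\]
since $\tfrac{n-\ell_1}{k-\ell_1} \ge 1$. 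For the complementary range $|\cF| < 2^k k^2 n^{r-1}$, the hypothesis $n \ge 2^k k^3$ makes the right-hand side at least $\bigl(\tfrac{n-k}{k}\bigr)^r$, which dominates $2^k k^2 n^{r-1}$ and confirms the bound directly, without needing a core.

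\textbf{Finding the common $\ell_1$-core.} Assume $|\cF| \ge 2^k k^2 n^{r-1}$. My strategy has three stages. First, a preliminary averaging over the $\binom{n}{\ell_1}$ candidate $\ell_1$-subsets of $[n]$ (via double-counting pairs $(A,T)$ with $T \in \binom{A}{\ell_1}$) selects a popular $C_0$ contained in a large subfamily $\cF_0 \subseteq \cF$; the $n^{r-1}$ factor in the threshold is exactly what pays for this averaging. Inside $\cF_0$, every pair intersects in $C_0$ at least, so already $|A \cap B| \in L \subseteq [\ell_1, \infty)$. Second, by iterated pigeonhole among the $r$ possible intersection values $\{\ell_1, \ldots, \ell_r\}$ — biased toward the minimum $\ell_1$ so that within $\cF_0$ the condition ``intersection exactly $\ell_1$'' is synonymous with ``disjoint outside $C_0$'' — I extract a pairwise $\ell_1$-intersecting subfamily of size $> k^2 - k + 1$, with the $2^k k^2$ factor absorbing the iteration cost. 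Third, Deza's classical sunflower lemma (a $k$-uniform family with constant pairwise intersection and more than $k^2 - k + 1$ members is a sunflower) promotes this subfamily to a sunflower $\{A_1, \ldots, A_{k+1}\}$ with kernel $C_0$ of size $\ell_1$ and pairwise disjoint petals $A_i \setminus C_0$.

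\textbf{Propagation and main obstacle.} I then show $C_0 \subseteq A$ for every $A \in \cF$, not only $\cF_0$. If some $A$ had $|A \cap C_0| \le \ell_1 - 1$, then for each petal $A_i$, $|A \cap A_i| \in L$ forces $|A \cap A_i| \ge \ell_1$, and the decomposition $A \cap A_i = (A \cap C_0) \sqcup (A \cap (A_i \setminus C_0))$ yields $|A \cap (A_i \setminus C_0)| \ge 1$; summing over the $k+1$ pairwise disjoint petals contradicts $|A| = k$. Setting $C \eqdef C_0$ completes the inductive step. The most delicate part of the whole argument is the second stage above: one must ensure the iterated pigeonhole locks onto the intersection value $\ell_1$ rather than some larger $\ell_j$, because a sunflower with kernel of size $\ell_j > \ell_1$ would make the propagation yield only $|A \cap C| \ge \ell_1$, not $C \subseteq A$. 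Performing the iteration inside $\cF_0$, where every set already contains $C_0$, is precisely what makes the extracted sunflower carry a kernel of the desired minimum size $\ell_1$; the bookkeeping that balances the averaging cost $\binom{n}{\ell_1} \le n^{r-1}$ against the iteration cost (absorbed into $2^k k^2$) is what calibrates the specific threshold $2^k k^2 n^{r-1}$.
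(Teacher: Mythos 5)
The paper does not prove this theorem; it quotes it verbatim as Theorem~18.1 of Frankl--Tokushige, so there is no internal proof to compare against. Judged on its own terms, however, your proposal has several genuine gaps.

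\textbf{The inductive reduction is wrong.} After peeling off a common $\ell_1$-core $C$, the family $\cF' = \{A \setminus C : A \in \cF\}$ is $(k-\ell_1)$-uniform and $(L - \ell_1)$-intersecting, where $L - \ell_1 = \{0,\, \ell_2 - \ell_1,\, \ldots,\, \ell_r - \ell_1\}$ has $r$ elements, \emph{not} $r-1$. The value $0$ cannot be dropped: any pair $A, B \in \cF$ with $|A \cap B| = \ell_1$ (hence $A \cap B = C$) becomes disjoint in $\cF'$, contradicting the claim that $\cF'$ is $\{\ell_2 - \ell_1, \ldots, \ell_r - \ell_1\}$-intersecting. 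Consequently the induction on $r$ does not decrease the induction parameter, and it is circular when $\ell_1 = 0$ (peeling off the empty core changes nothing). Note that if one correctly takes $L' = L - \ell_1$ of size $r$, applying the theorem to $\cF'$ still reproduces the desired bound $\prod_{i=1}^r \frac{n-\ell_i}{k-\ell_i}$, but then the inductive scheme must be repaired (e.g.\ inducting on $k$ with a careful treatment of $\ell_1 = 0$).

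\textbf{The complementary-range shortcut fails for $r \ge 2$.} You assert that $n \ge 2^k k^3$ forces $\bigl(\tfrac{n-k}{k}\bigr)^r \ge 2^k k^2 n^{r-1}$. Clearing denominators, this requires $(n-k)^r \ge 2^k k^{r+2} n^{r-1}$, i.e.\ roughly $n \gtrsim 2^k k^{r+2}$. For $r = 2$, $k = 10$, and $n = 2^{10}\cdot 10^3$ (the hypothesis boundary), one has $\bigl(\tfrac{n-k}{k}\bigr)^2 \approx 2^{2k}k^4$ while $2^k k^2 n = 2^{2k}k^5$, so the inequality fails by a factor of $k$. Thus the first bound is not ``confirmed directly'' in this range; the case $|\cF| < 2^k k^2 n^{r-1}$ must be handled by an actual argument, not dismissed.

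\textbf{The averaging step does not pay for itself when $\ell_1 \ge r$.} Your double-count over $\binom{n}{\ell_1}$ candidate $\ell_1$-sets yields a popular $C_0$ contained in at least $|\cF|\binom{k}{\ell_1}/\binom{n}{\ell_1} \gtrsim 2^k k^2 n^{r-1-\ell_1}$ sets. Nothing in the hypotheses forces $\ell_1 \le r-1$ (e.g.\ $k = 10$, $L = \{5, 7\}$ has $\ell_1 = 5$, $r = 2$); when $\ell_1 \ge r$ this quantity tends to $0$, so no ``large'' $\cF_0$ is produced and the whole core-finding strategy collapses.

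\textbf{The key pigeonhole step is unsubstantiated.} You yourself flag that the ``iterated pigeonhole'' must lock onto the value $\ell_1$ rather than a larger $\ell_j$, and that this is ``the most delicate part.'' But no argument is given, and it is not true a priori: inside $\cF_0$ all pairwise intersections could equal $\ell_2 > \ell_1$, in which case no pairwise $\ell_1$-intersecting subfamily of size $\ge 2$ exists at all. Some additional structural input (e.g.\ an extremal sunflower argument establishing that a maximum sunflower in an $L$-intersecting family of the required size necessarily has kernel size $\min L$) is needed here, and it is precisely the heart of the Deza--Erd\H{o}s--Frankl proof. The propagation step of your argument (once a sunflower with kernel of size exactly $\ell_1$ and $k+1$ petals is in hand, every set must contain the kernel) is correct, but everything leading up to it is broken.
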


We also need the following corollary of~\Cref{thm:deza_erdos_frankl}.

\begin{lemma}\label{lem:uniform}
    If $\cG \subseteq 2^{[n]}$ is a $k$-uniform $D$-distance set family with $n \ge 2^k k^3$, then $|\cG| \le \prod\limits_{\ell \in \De} \frac{2n}{\ell}$. 
\end{lemma}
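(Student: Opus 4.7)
The plan is to reduce the problem to Deza--Erd\H{o}s--Frankl (\Cref{thm:deza_erdos_frankl}) by converting the distance condition into an intersection-size condition. Since every $A \in \cG$ has size $k$, any distance $|A \bigtriangleup B| = 2k - 2|A \cap B|$ is automatically an even integer in $[0, 2k]$. Pairing this with the $D$-distance hypothesis, the actually attained distances must lie in $\De \cap [0, 2k]$, so the intersection sizes of $\cG$ fall in
\[
L \eqdef \bigl\{ k - \ell/2 : \ell \in \De \cap [0, 2k] \bigr\}.
\]
Every $\ell \in \De$ is a positive even integer, hence $\ell \ge 2$, and so $L \subseteq \{0, 1, \dots, k-1\}$ and $\cG$ is a $k$-uniform $L$-intersecting family.

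Next, the hypothesis $n \ge 2^k k^3$ is exactly what is needed to apply \Cref{thm:deza_erdos_frankl}, which yields
\[
|\cG| \le \prod_{m \in L} \frac{n - m}{k - m} = \prod_{\ell \in \De \cap [0, 2k]} \frac{n - k + \ell/2}{\ell/2} = \prod_{\ell \in \De \cap [0, 2k]} \frac{2(n - k) + \ell}{\ell}.
\]
Since every $\ell$ in the product is at most $2k$, each factor satisfies $\frac{2(n-k) + \ell}{\ell} \le \frac{2n}{\ell}$, which gives $|\cG| \le \prod_{\ell \in \De \cap [0, 2k]} \frac{2n}{\ell}$. To upgrade this to the product over all of $\De$, I would observe that any omitted $\ell \in \De$ with $\ell > 2k$ contributes a factor $\frac{2n}{\ell}$ that is at least $1$ in the regime where the stated bound is nontrivial (i.e.\ when $n$ is at least the largest element of $\De$ divided by two), so extending the product is harmless.

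The entire argument is essentially bookkeeping once \Cref{thm:deza_erdos_frankl} is invoked: translating distance sizes into intersection sizes via $|A \cap B| = k - |A \bigtriangleup B|/2$, and then comparing two elementary products. I expect no genuine obstacle; the only mild subtlety is the passage from the product indexed by $\De \cap [0, 2k]$ to the product indexed by all of $\De$, which is justified by the trivial observation that the omitted factors do not shrink the right-hand side.
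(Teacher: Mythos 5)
Your proof follows the paper's argument essentially verbatim: translate the $D$-distance condition into an $L$-intersecting condition via $|A \cap B| = k - |A \bigtriangleup B|/2$, apply the Deza--Erd\H{o}s--Frankl theorem, and compare the resulting products term by term. The one point you flag---that factors $\frac{2n}{\ell}$ for $\ell \in \De$ with $\ell > 2k$ should be at least $1$---is indeed glossed over in the paper as well (it holds in all applications since there $\De \subseteq [2,2k]$), so your treatment matches the paper's level of rigor and even makes that implicit step explicit.
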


\begin{proof}
    For distinct $A, B \in \cG$, since $|A \bigtriangleup B| = |A| + |B| - 2|A \cap B|$, we see that $\cG$ is an $L$-intersecting family, where $L \eqdef \bigl( k - \frac{\De}{2} \bigr) \cap \Z_{\ge 0}$. It then follows from \Cref{thm:deza_erdos_frankl} that 
    \[
    |\cG| \le \prod_{\ell \in L} \frac{n-\ell}{k-\ell} \le \prod_{\ell \in \De} \frac{2n}{\ell}. \qedhere
    \]
\end{proof}

\subsection{Proof of \texorpdfstring{\Cref{thm:homo_upper}}{Theorem 3.1}}

The $\De = \varnothing$ case is easy. We shall discuss it separately at the beginning of \Cref{sec:nonhomo}. 

\smallskip

Here and after we assume that $\De \ne \varnothing$. By passing to a subprogression of $D$ if necessary, we may assume that there is a pair of sets $\cF$ realizing the maximum distance $td$ in $D$. By flipping the belonging relation of a single element in $[n]$ to each set from $\cF$ simultaneously and then relabeling, we may further assume that $\varnothing \in \cF$ and $[td] \in \cF$. Introduce the auxiliary subfamily
\[
\cF' \eqdef \bigl\{ A \in \cF \colon \bigl| A \setminus [td] \bigr| \ge d^*/2 \bigr\}, \quad \text{where $d^* \eqdef \max(\De)$}. 
\]
To upper bound $|\cF|$, we are to show that $|\cF \setminus \cF'|$ is small; and to upper bound $|\cF'|$ by analyzing its intersection pattern and applying the Frankl--Wilson and the Deza--Erd\H{o}s--Frankl theorems. 

\begin{claim} \label{obs:aux_diff}
    As $n \to \infty$, we have $|\cF \setminus \cF'| = O \bigl( n^{|\De|-1} \bigr)$. 
\end{claim}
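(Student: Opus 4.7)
The plan is to stratify $\cF \setminus \cF'$ by the pair of parameters $(|A|, |A \bigtriangleup [td]|)$. Since $\varnothing, [td] \in \cF$, both quantities lie in $\{0\} \cup D$ for every $A \in \cF$, so they may be written as $jd$ and $i'd$ with $j, i' \in \{0, s, s+1, \ldots, t\}$. A short calculation then yields $m_1 \eqdef |A \cap [td]| = \tfrac{(j + t - i')d}{2}$ and $m_2 \eqdef |A \setminus [td]| = \tfrac{(j + i' - t)d}{2}$, and the condition $A \in \cF \setminus \cF'$ forces $m_2 < d^*/2$. In particular $m_2 = O(1)$ and only $O(1)$ types $(j, i')$ are admissible.

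Having fixed such a type, I would consider the restriction map $\sigma \colon A \mapsto A \cap [td]$, whose image lies in $\binom{[td]}{m_1}$ and therefore has at most $\binom{td}{m_1} = O(1)$ fibers. For two distinct sets $A, A'$ in a common fiber, the symmetric difference $A \bigtriangleup A' = (A \setminus [td]) \bigtriangleup (A' \setminus [td])$ is supported in $[n] \setminus [td]$ and satisfies
\[
|A \bigtriangleup A'| \le |A \setminus [td]| + |A' \setminus [td]| < d^*.
\]
Because $|A| = |A'| = jd$, the quantity $|A \bigtriangleup A'|$ is also even, and the distance restriction places it in $D$; combining, $|A \bigtriangleup A'| \in \De \setminus \{d^*\}$.

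Consequently $\{A \setminus [td] \colon A \text{ in the fiber}\}$ is an $m_2$-uniform $(\De \setminus \{d^*\})$-distance family in $[n] \setminus [td]$, and since $m_2 = O(1)$, \Cref{lem:uniform} applies for large $n$ to bound each fiber by $\prod_{\ell \in \De \setminus \{d^*\}} \tfrac{2(n - td)}{\ell} = O(n^{|\De|-1})$. Summing over the $O(1)$ types and the $O(1)$ fibers per type will yield the desired bound $|\cF \setminus \cF'| = O(n^{|\De|-1})$. The step I expect to be the main obstacle is recognising that within a fiber the pairwise distances are simultaneously even and strictly below $d^* = \max(\De)$, which is precisely what excludes $d^*$ from the effective distance set and saves the extra factor of $n$ compared to the naive bound $O(n^{|\De|})$ one would get by applying \Cref{lem:uniform} directly to each uniformity class of $\cF$.
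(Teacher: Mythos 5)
Your proof is correct and captures exactly the key idea of the paper's own argument: partition $\cF \setminus \cF'$ according to $A \cap [td]$ (a bounded number of fibers, since $td = O(1)$), observe that within a fiber any two sets differ only outside $[td]$ and hence have symmetric difference of size strictly less than $d^*$, so the effective distance set loses $d^* = \max(\De)$ and the generic $D$-distance bound drops by one power of $n$. The paper goes straight to this partition and then invokes \Cref{coro:distance_weak} on the resulting $D \setminus \{d^*\}$-distance family $\cF_S$, which already gives $O(n^{|\De|-1})$ without any uniformity. Your extra preliminary stratification by $(|A|, |A \bigtriangleup [td]|)$, which pins down $m_1, m_2$ and lets you apply the uniform bound \Cref{lem:uniform} instead, is perfectly sound but unnecessary: once the fiber $S$ is fixed you already get the needed drop from $\De$ to $\De \setminus \{d^*\}$ regardless of whether the fiber is uniform, and \Cref{coro:distance_weak} (which has no $n \ge 2^k k^3$ hypothesis to check) does the job in one line. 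In short, same approach with a slightly heavier stratification and a stronger lemma where a weaker one suffices.
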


\begin{poc}
    For each $S \subseteq [td]$, define $\cF_S \eqdef \bigl\{ A \in \cF \setminus \cF' \colon A \cap [td] = S \bigr\}$. It suffices to show that $|\cF_S| =  O\bigl(n^{|\De|-1}\bigr)$. For distinct $A, B \in \cF_S$, note that $|A \bigtriangleup B| \le \bigl|A \setminus [td]\bigr| + \bigl|B \setminus [td]\bigr| < d^*$. Thus, $\cF_S$ is a $D \setminus \{d^*\}$-distance family. The desired bound on $|\cF_S|$ then follows from~\Cref{coro:distance_weak}. 
\end{poc}

It remains to upper bound the size of $\cF'$. 

\begin{claim} \label{obs:aux_size}
For any $A\in \cF'$, we have
\[
|A| = \begin{cases}
    td \qquad &\text{if $td$ is even}, \\
    \text{$td$ or $(t-1)d$} \qquad &\text{if $td$ is odd}. 
\end{cases}
\]
\end{claim}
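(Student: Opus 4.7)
The plan is to parametrise each $A \in \cF'$ using the two constraints $|A| \in D$ (from $\varnothing \in \cF$) and $|A \bigtriangleup [td]| \in D$ (from $[td] \in \cF$), and then translate the threshold $|A \setminus [td]| \ge d^*/2$ into a numerical inequality on the parameters. Note first that every $A \in \cF'$ differs from both $\varnothing$ and $[td]$, since otherwise $|A \setminus [td]| = 0 < d^*/2$. Hence both of the above sizes are genuine nonzero elements of $D$, and we may write $|A| = kd$ and $|A \bigtriangleup [td]| = k'd$ with integers $s \le k, k' \le t$.

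The key computation is the identity $|A \bigtriangleup [td]| = |A| + td - 2|A \cap [td]|$, which rearranges to
\[
|A \cap [td]| = \frac{(k + t - k')d}{2} \quad\text{and}\quad |A \setminus [td]| = \frac{(k + k' - t)d}{2}.
\]
Consequently, the defining condition $|A \setminus [td]| \ge d^*/2$ of $\cF'$ becomes the clean inequality $k + k' \ge t + d^*/d$. From here the claim splits according to the parity of $td$. If $td$ is even, then $td \in \De$ forces $d^* = td$, so the inequality reads $k + k' \ge 2t$; combined with $k, k' \le t$ this pins $k = k' = t$, and hence $|A| = td$. If $td$ is odd (so both $t$ and $d$ are odd), the largest even element of $D$ is $(t-1)d$, giving $d^* = (t-1)d$ and the inequality $k + k' \ge 2t - 1$, which leaves only $(k, k') \in \{(t,t),\,(t, t-1),\,(t-1, t)\}$.

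The main obstacle is ruling out the diagonal case $(k, k') = (t, t)$ in the odd subcase, since the size inequality alone is not strong enough. I would handle this via an integrality argument: $|A \cap [td]|$ must be a non-negative integer, whereas plugging $(k, k') = (t, t)$ into the formula above yields $|A \cap [td]| = td/2$, which fails to be an integer precisely when $td$ is odd. Hence the only surviving options are $(k, k') \in \{(t, t-1),\, (t-1, t)\}$, giving $|A| \in \{(t-1)d, td\}$ as claimed.
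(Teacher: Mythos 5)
Your proposal is correct and uses the same essential ingredients as the paper's proof: the two constraints $|A| \in D$ and $|A \bigtriangleup [td]| \in D$ (arising from $\varnothing, [td] \in \cF$) together with the defining threshold $|A \setminus [td]| \ge d^*/2$. You reparametrize by $(k,k')$ and reduce the threshold to $k + k' \ge t + d^*/d$; the paper reaches the same point more directly by noting $y \le x$ (from $|A \bigtriangleup [td]| \le td$), hence $|A| = x + y \ge 2y \ge d^*$, and then combining $d^* \le |A| \le td$ with $|A| \in D$. One remark: your integrality step ruling out $(k,k') = (t,t)$ is correct but unnecessary for this claim. That case would give $|A| = td$, which the claim explicitly allows when $td$ is odd; your list $\{(t,t),(t,t-1),(t-1,t)\}$ already yields $|A| \in \{(t-1)d, td\}$ without any further pruning. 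What you proved as a bonus is the slightly stronger fact that, when $td$ is odd, $|A| = td$ forces $|A \bigtriangleup [td]| = (t-1)d$ --- true and potentially illuminating, but not the ``main obstacle'' here, since the claim never asks you to exclude $|A| = td$.
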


\begin{poc}
    Write $x \eqdef \bigl| A \cap [td] \bigr|$ and $y \eqdef \bigl| A \setminus [td] \bigr| \ge d^*/2$. Since $\cF'$ is a $D$-distance family, 
    \begin{align*}
        y + (td-x) = \bigl| A \setminus [td] \bigr| + \bigl| [td] \setminus A \bigr| = \bigl| A \bigtriangleup [td] \bigr| \le td \implies y \le x. 
    \end{align*}
    It follows that $|A| = x + y \ge 2y \ge d^*$, and so $|A| \ge td$ if $td$ is even while $|A| \ge (t-1)d$ if $td$ is odd. On the other hand, $|A| \le td$ as $\varnothing \in \cF$, concluding the claim. 
\end{poc}

If $td$ is even, then $d^* = td$, and \Cref{obs:aux_size} says that $\cF'$ is a $td$-uniform $D$-distance set family. So, from \Cref{lem:uniform} and \Cref{obs:aux_diff} we deduce that
\begin{equation} \label{eq:td_even}
    |\cF| = |\cF'| + |\cF \setminus \cF'| \le \prod_{\ell \in \De} \frac{2n}{\ell} + O\bigl(n^{|\De|-1}\bigr) = \bigl(1+o(1)\bigr) \cdot \prod_{\ell \in \De} \frac{2n}{\ell}. 
\end{equation}

If $td$ is odd, then $d^* = (t-1)d$. Due to \Cref{obs:aux_size}, we are able to partition $\cF'$ into a $td$-uniform family $\cF'_1\eqdef \{ A \in \cF' : |A| = td \}$ and a $(t-1)d$-uniform family $\cF'_2\eqdef \{ A \in \cF' \colon |A| = (t-1)d \}$.
It then follows from \Cref{lem:uniform} and \Cref{obs:aux_diff} that
\begin{equation} \label{eq:td_odd}
    |\cF| = |\cF'_1| + |\cF'_2| + |\cF \setminus \cF'| \le \bigl(2+o(1)\bigr) \cdot \prod_{\ell \in \De} \frac{2n}{\ell}. 
\end{equation}

By combining \eqref{eq:td_even} and \eqref{eq:td_odd}, \Cref{thm:homo_upper} is proved in all but one case. We are left with improving the upper bound on $|\cF|$ by a factor of $2$ when $d, t$ are odd and $s$ is even. In this case, 
\[
\De = \bigl\{ sd, (s+2)d, \dots, (t-1)d \bigr\}. 
\]
To achieve this, we need to analyze $\cF'_1, \cF'_2$ carefully and utilize the structural part of \Cref{thm:deza_erdos_frankl}. 

For distinct $A, B\in \cF'$, we compute the possible sizes of their intersection as follows:
\[
|A \cap B| = \frac{|A|+|B|-|A \bigtriangleup B|}{2} \in \begin{cases}
    td - \frac{\De}{2} \qquad &\text{if $A, B \in \cF'_1$}; \\
    (t-1)d - \frac{\De}{2} \qquad &\text{if $A, B \in \cF'_2$};\\
    (t-1)d - \frac{\De}{2} \qquad &\text{if $A \in \cF'_1$ and $B \in \cF'_2$}. 
\end{cases}
\]
Denote 
\begin{align*}
    L_1 &\eqdef td - \tfrac{\De}{2} = \Bigl\{ \tfrac{(t+1)d}{2}, \tfrac{(t+3)d}{2}, \dots, \tfrac{(2t-s)d}{2} \Bigr\}, \\
    L_2 &\eqdef (t-1)d - \tfrac{\De}{2} = \Bigl\{ \tfrac{(t-1)d}{2}, \tfrac{(t+1)d}{2}, \dots, \tfrac{(2t-s-2)d}{2} \Bigr\}. 
\end{align*}
It follows that $\cF'_1$ is an $L_1$-intersecting family and $\cF'_2$ is an $L_2$-intersecting family. 

For any set family $\cG$, we denote its \emph{center} as $\bigcap \cG \eqdef \bigcap_{A \in \cG} A$. If the center of $\cF'_1$ is of size smaller than $\min \bigl( td - \frac{\De}{2} \bigr) = \frac{(t+1)d}{2}$, then \Cref{thm:deza_erdos_frankl} implies that $|\cF'_1| = O\bigl( n^{|\De|-1} \bigr)$, and hence
\[
|\cF| = |\cF'_1| + |\cF'_2| + |\cF \setminus \cF'| \le \bigl(1+o(1)\bigr) \cdot \prod_{\ell \in \De} \frac{2n}{\ell}. 
\]
We may then assume $\bigl| \bigcap \cF'_1 \bigr| \ge \frac{(t+1)d}{2}$ and similarly $\bigl| \bigcap \cF'_2 \bigr| \ge \min \bigl( (t-1)d - \frac{\De}{2} \bigr) = \frac{(t-1)d}{2}$. 

Arbitrarily fix a $\frac{(t+1)d}{2}$-element subset $X_1 \subseteq \bigcap \cF'_1$ and a $\frac{(t-1)d}{2}$-element subset $X_2 \subseteq \bigcap \cF'_2$. 

\smallskip

\noindent\textbf{Case 1.} $X_2 \subseteq X_1$. 

\smallskip

Denote $Y \eqdef X_1 \setminus X_2$ and observe that $|Y| = \frac{(t+1)d}{2} - \frac{(t-1)d}{2} = d$. For each $S \subseteq Y$, we write
\[
\cF_{2, S} \eqdef \{A \in \cF'_2 \colon A \cap Y = S\}
\]
and thus partition $\cF'_2 = \bigcup_{S \subseteq Y} \cF_{2, S}$ into $2^{|Y|} = 2^d$ subfamilies. For each non-empty $S \subseteq Y$ and any distinct $A, B \in \cF_{2, S}$, we have $|A \cap B| \ge |X_2| + |S| > \frac{(t-1)d}{2}$. Then $\cF_{2, S}$ is a $(t-1)d$-uniform and $L_2 \setminus \bigl\{ \frac{(t-1)d}{2} \bigr\}$-intersecting family, and so from \Cref{thm:frankl_wilson} we deduce that $|\cF_{2, S}| = O\bigl( n^{|\De|-1} \bigr)$. 

Define $\cF_{2, \varnothing}^+ \eqdef \{A \cup Y \colon A \in \cF_{2, \varnothing}\}$. Then, crucially, we observe that the union $\cF'_1 \cup \cF_{2, \varnothing}^+$ forms a $td$-uniform $L_1$-intersecting family, since sets between $\cF'_1, \cF'_2$ are $L_2$-intersecting and $L_1 = L_2 + |Y|$. 
This implies $|\cF'_1 \cup \cF_{2, \varnothing}^+| \le\prod\limits_{\ell \in \De} \frac{2n}{\ell}$ by \Cref{thm:deza_erdos_frankl}. 

We claim that $\cF'_1$ and $\cF_{2, \varnothing}^+$ are disjoint. Suppose there is some $A \cup Y \in \cF'_1 \cap \cF_{2, \varnothing}^+$. Again, sets between $\cF'_1$ and $\cF'_2$ have intersection size at most $\max(L_2) = \frac{(2t-s-2)d}{2} \le (t-2)d$ as $s\ge 2$ is even. However, $A \cup Y \in \cF'_1$ and $A \in \cF_{2, \varnothing} \subseteq \cF_2'$ have intersection size $|A| = (t-1)d$, a contradiction. So,
\[
|\cF| = |\cF'_1| + |\cF'_2| + |\cF \setminus \cF'| = |\cF'_1 \cup \cF_{2, \varnothing}^+| + \sum_{\varnothing \ne S \subseteq Y} |\cF_{2, S}| + |\cF \setminus \cF'| \le \bigl(1+o(1)\bigr) \cdot \prod\limits_{\ell \in \De} \frac{2n}{\ell}. 
\]

\smallskip

\noindent\textbf{Case 2.} $X_2 \nsubseteq X_1$. 

\smallskip

Pick some element $x \in X_2$ and set $Z \in \cF'_1$ such that $x \notin Z$. For each $T \subseteq Z$, we write
\[
\cF_{2, T} \eqdef \{A \in \cF'_2 \colon A \cap Z = T\}
\]
and partition $\cF'_2$ into $2^{|Z|} = 2^{td}$ subfamilies. Recall that for any $A \in \cF'_2$, we have $|A \cap Z| \in L_2$, and so $\cF_{2, T} = \varnothing$ whenever $|T| < \min(L_2) = \frac{(t-1)d}{2}$. Fix an arbitrary $T \subseteq Z$ with $|T| \ge \frac{(t-1)d}{2}$. For every distinct $A, B \in \cF_{2, T}$, observe that $T \cup \{x\} \subseteq A \cap B$, hence $|A \cap B| \ge \frac{(t+1)d}{2}$. It follows that $\cF_{2, T}$ is $(t-1)d$-uniform and $L_2 \setminus \bigl\{ \frac{(t-1)d}{2} \bigr\}$-intersecting, and so $|\cF_{2, T}| = O\bigl( n^{|\De|-1} \bigr)$ by \Cref{thm:frankl_wilson}. So, 
\[
|\cF| = |\cF'_1| + |\cF'_2| + |\cF \setminus \cF'| = |\cF'_1| + \sum_{T \subseteq Z} |\cF_{2, T}| + |\cF \setminus \cF'| \le \bigl(1+o(1)\bigr) \cdot \prod\limits_{\ell \in \De} \frac{2n}{\ell}. 
\]

We thus conclude the case when $s$ is even and $d, t$ are odd. The proof of \Cref{thm:homo_upper} is done. 

\section{Proof of \texorpdfstring{\Cref{thm:nonhomo}}{Theorem 1.2}} \label{sec:nonhomo}

The $\De = \varnothing$ case is easy, and is already included in the previous work due to Huang, Klurman and Pohoata \cite[Section 3]{huang_klurman_pohoata}. In fact, the $D$-distance family $\bigl\{ \varnothing, [sd+a] \bigr\}$ shows $f_D(n) \ge 2$, and from
\[
|A \bigtriangleup B| + |B \bigtriangleup C| + |C \bigtriangleup A| = 2 \bigl( |A \cup B \cup C| - |A \cap B \cap C| \bigr)
\]
we deduce that $f_D(n) \le 2$. It follows that $f_D(n) = 2$ whenever $\De = \varnothing$ and $n \ge \min (D)$. 

\smallskip

Hereafter we focus on the $\De \ne \varnothing$ case. Set $u \eqdef \frac{\min(\De)}{2}$. Then any collection of $\bigl\lfloor \frac{n}{u} \bigr\rfloor$ disjoint $u$-subsets of $[n]$ is a $D$-distance family, and so $f_D(n) \ge \bigl\lfloor \frac{n}{u} \bigr\rfloor = \bigl\lfloor \frac{2n}{\min(\De)} \bigr\rfloor$. The lower bound follows. 

To deduce the upper bound, we need the following result concerning modular intersecting families due to Babai, Frankl, Kutin, and \v{S}tefankovi\v{c}. 

\begin{theorem}[{\cite[Theorem 1.1]{babai_frankl_kutin_stefankovic}}] \label{thm:intersecting_mod}
    Suppose $p$ is a prime, $q \eqdef p^k \, (k \in \N_+)$, and $L \subseteq \{0, 1, \dots, q-1\}$. Assume $X$ is an $n$-element ground set and $A_1, \dots, A_m$ are subsets of $X$ with the following property: 
    \vspace{-0.5em}
    \begin{itemize}
        \item For any $i, j \in [m]$, the modular cardinality $|A_i \cap A_j| \pmod q \in L$ if and only if $i \ne j$. 
    \end{itemize}
    \vspace{-0.5em}
    Set $D \eqdef 2^{|L|-1}$. Then $m \le \binom{n}{D} + \binom{n}{D-1} + \dots + \binom{n}{0}$. 
\end{theorem}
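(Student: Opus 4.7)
The plan is to apply the polynomial method in the style of Frankl--Wilson, carried out over $\F_p$ despite the restrictions being modulo $p^k$. Identify each $A_i$ with its indicator vector $v_i \in \{0,1\}^n \subseteq \F_p^n$, so that $|A_i \cap A_j| = \bracket{v_i, v_j}$ when the inner product is computed in $\Z$. The entire argument will take place in the ring $\F_p[x_1, \dots, x_n]$, with evaluations on the $v_j$'s.

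\medskip

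The core technical step is to construct a single univariate polynomial $P(y) \in \Z[y]$ of degree at most $D = 2^{|L|-1}$ such that, for every non-negative integer $m$,
\[
P(m) \equiv 0 \pmod p \iff m \bmod q \in L.
\]
When $k = 1$, the Deza--Frankl polynomial $\prod_{\ell \in L}(y - \ell)$ enjoys this property and already has degree $|L|$; however, for $k \ge 2$ it fails, because $P(m) \equiv 0 \pmod p$ would only force $m \equiv \ell \pmod p$ for some $\ell \in L$, not $\pmod{p^k}$. The way around this is to use Lucas' theorem to extract individual base-$p$ digits of $m$ via the polynomials $\binom{m}{p^j} \bmod p$, and then to encode the condition $m \bmod q \in L$ by recursively splitting $L$ into two pieces according to the value of a common base-$p$ digit. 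Each binary split contributes a multiplicative factor of $2$ in the degree, yielding the bound $2^{|L|-1}$ after $|L| - 1$ splits.

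\medskip

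With $P$ in hand, for each $i \in [m]$ set $F_i(x_1, \dots, x_n) \eqdef P \bigl( \sum_{t \in A_i} x_t \bigr) \in \F_p[x_1, \dots, x_n]$, a polynomial of degree at most $D$. Since evaluations only happen at $\{0,1\}$-vectors, reducing $F_i$ modulo the relations $x_t^2 = x_t$ does not change any of these values and produces a multilinear $\widetilde{F}_i$ of the same degree bound. By the defining property of $P$, we have $\widetilde{F}_i(v_j) = P(|A_i \cap A_j|) \equiv 0 \pmod p$ for $i \ne j$, while $\widetilde{F}_i(v_i) = P(|A_i|) \not\equiv 0 \pmod p$ because $|A_i| \bmod q \notin L$. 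Substituting a hypothetical relation $\sum_i c_i \widetilde{F}_i \equiv 0$ into each $v_j$ forces $c_j = 0$, so the $\widetilde{F}_i$'s are linearly independent in the space of multilinear polynomials in $n$ variables of degree at most $D$, whose dimension is $\sum_{i=0}^{D} \binom{n}{i}$. This yields the desired bound on $m$.

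\medskip

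The main obstacle is the construction of $P$. Producing a polynomial whose degree is bounded purely by a function of $|L|$, and crucially is independent of both $p$ and $k$, for modular conditions over prime powers is genuinely delicate: no direct product formula suffices, since the multiplicative structure of $\Z/p^k\Z$ is much richer than that of $\Z/p\Z$. The exponential bound $2^{|L|-1}$ reflects the recursive branching construction sketched above, and once this test polynomial is available the remainder of the argument reduces to a standard triangular linear-independence argument over $\F_p$.
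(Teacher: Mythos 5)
The paper does not prove \Cref{thm:intersecting_mod}; it cites it directly from Babai, Frankl, Kutin, and \v{S}tefankovi\v{c}. So the relevant question is whether your sketch stands on its own.

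It does not: the core lemma you reduce to is false as stated. You want a single polynomial $P$ of degree at most $D = 2^{|L|-1}$, with $D$ independent of $p$ and $k$, satisfying $P(m) \equiv 0 \pmod p$ if and only if $m \bmod p^k \in L$ for \emph{every} non-negative integer $m$. Take $|L| = 1$, say $L = \{0\}$, and $q = p^2$. Then $D = 1$. If $P \in \Z[y]$ (as you wrote), $P(m) \bmod p$ depends only on $m \bmod p$, so $P$ cannot distinguish $m = 0$ (should vanish) from $m = p$ (should not), regardless of degree. Even if you widen to integer-valued polynomials in the sense of the binomial basis $\binom{y}{j}$, a degree-$1$ polynomial is still $ay+b$, and the same obstruction applies. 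So no polynomial of degree $2^{|L|-1} = 1$ can realize your iff here.

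The Lucas-based digit extraction you invoke also does not give the claimed degree. The polynomial $\binom{y}{p^j}$ that reads off the $j$-th base-$p$ digit has degree $p^j$, which grows with $p$ and $j$, not with $|L|$. Your recursive-splitting heuristic, which supposedly costs a factor of $2$ per split, would in fact cost factors of size $p^j$ per digit read, so the degree of the resulting polynomial scales with $p^{k-1}$, not with $2^{|L|-1}$. This is precisely the obstacle you acknowledge at the end as ``genuinely delicate,'' but the sketch you give does not overcome it. The actual proof of Babai--Frankl--Kutin--\v{S}tefankovi\v{c} does not proceed by building a single universal test polynomial of this kind; the $2^{|L|-1}$ in the bound reflects a genuinely different mechanism (they obtain a family of low-degree multilinear polynomials tailored to the individual sets, not the substitution $P(\sum_{t\in A_i} x_t)$ of a universal $P$), and recovering it requires ideas beyond the Frankl--Wilson template you are using. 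The triangular linear-independence step at the end of your proposal is standard and correct, but the reduction to it, via the existence of $P$, is where the argument breaks.
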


Let $\cF$ be a $D$-difference family and assume without loss of generality that $\varnothing \in \cF$. Consider the family $\cF' \eqdef \cF \setminus \{\varnothing\}$. Then $|A| = |A \bigtriangleup \varnothing| \in D$ holds for any $A \in \cF'$. For any sets $A, B$, note that
\begin{equation} \label{eq:sym_diff}
    2|A \cap B| = |A| + |B| - |A \bigtriangleup B|. 
\end{equation}
Since $a < d$, we can find a prime power $q = p^k$ that divides $d$ but not $a$. Here $p$ is a prime.
\vspace{-0.5em}
\begin{itemize}
    \item If $p = 2$, then the fact $\De \ne \varnothing$ implies that $a$ is even and $k \ge 2$. For each $A \in \cF'$, we have $|A| \equiv a \pmod {2^{k-1}}$. For distinct $A, B \in \cF'$, from \eqref{eq:sym_diff} we deduce that $|A \cap B| \equiv \frac{a}{2} \pmod {2^{k-1}}$. Notice that $a\not\equiv \frac{a}{2} \pmod {2^{k-1}}$ as $q$ does not divide $a$. Thus, \Cref{thm:intersecting_mod} applied to $2^{k-1}$ and $L = \bigl\{ \frac{a}{2} \bigr\}$ implies that $|\cF'| \le n + 1$, and so $|\cF| \le n + 2$. 
    \vspace{-0.5em}
    \item If $p \ge 3$, then $q$ is odd. For every $A \in \cF'$, we have $|A| \equiv a \pmod q$. For distinct $A, B \in \cF'$, from \eqref{eq:sym_diff} we deduce that $2|A \cap B| \equiv a \pmod q$ hence $|A \cap B| \equiv \frac{a(q+1)}{2} \pmod q$. Again, notice that $a \not\equiv \frac{a(q+1)}{2} \pmod q$ since $q$ does not divide $a$. Therefore, \Cref{thm:intersecting_mod} applied to $q$ and $L = \bigl\{ \frac{a(q+1)}{2} \bigr\}$ implies that $|\cF'| \le n + 1$, and so $|\cF| \le n + 2$. 
\end{itemize}
\vspace{-0.5em}
By combining the cases above, the proof of \Cref{thm:nonhomo} is complete. 

\begin{remark*}
    Later, Xu and Yip pointed out to us that, using a more refined analysis, they managed to improve the $n + 2$ upper bound in~\Cref{thm:nonhomo} to $n + 1$ (see~\cite[Proposition 6.6]{xu_yip}). Indeed, this can be further improved to $n$ for certain classes of $D$ (for instance, this holds whenever there exists a prime $p$ dividing $d$ but not $a$). Observe that the improved bound is optimal when $\min(\De) = 2$. However, when $\min(\De) \ge 4$, it remains open whether the bound can be improved to anything below $n$. See~\Cref{question:1} in \Cref{sec:remark} for further discussions. 
\end{remark*}

\section{Proof of \texorpdfstring{\Cref{thm:exact}}{Theorem 1.4}} \label{sec:exact}

Fix a $t$-set $D \subseteq \mathbb{N}_+$ with $D \ne \{2, 4, \dots, 2t\}$ and take $n$ be sufficiently large in terms of $t$. Thanks to \Cref{coro:distance_weak}, it suffices to prove the special case that $D$ consists of even numbers. Let $\cF$ be a $D$-distance family. Write $d \eqdef \max(D)$ and assume without loss of generality that $\varnothing \in \cF, \, [d] \in \cF$. Then for any set $A\in \cF\setminus\{\varnothing\}$, we have $|A|\in D$. 

\subsection{Bounding \texorpdfstring{$A(\cH_n, D)$ away from $A \bigl( \cH_n, \{2, 4, \dots, 2t\} \bigr)$}{A(H, D) away from A(H, \{2,4,...,2t\})}}

If $d \ge 4t^2$, then we write $\cF(z) \eqdef \{A \in \cF \colon |A| = z\}$ for each $z \in D$. Due to \Cref{lem:uniform}, 
\[
|\cF| = |\{\varnothing\}| + \sum_{z \in D} |\cF(z)| \le 1 + \sum_{z \in D} \biggl( \prod_{\ell \in D} \frac{2n}{\ell} \biggr) \le 1 + t \cdot \frac{n^{t-1}}{(t-1)!} \cdot \frac{n}{2t^2} \le \Bigl( \frac{1}{2} + o(1) \Bigr) \cdot \binom{n}{t}. 
\]

We may then assume $d < 4t^2$. Similar to the proof of \Cref{thm:homo_upper}, we denote
\[
\cF' \eqdef \bigl\{ A \in \cF \colon \bigl| A \setminus [d] \bigr| \ge d/2 \bigr\}
\]
and bound from above the sizes of $\cF'$ and $\cF \setminus \cF'$ separately. 
\vspace{-0.5em}
\begin{itemize}
    \item For any $A \in \cF'$, write $x \eqdef \bigl| A \cap [d] \bigr|$ and $y \eqdef \bigl| A \setminus [d] \bigr| \ge d/2$. Then $\bigl|A \bigtriangleup [d]\bigr| = d - x + y \le d$, and so $x \ge y \ge d/2$, which implies that $|A| = x + y \ge d$. Combining this with $|A| = |A \bigtriangleup \varnothing| \le d$, we see that $|A| = d$. So, $\cF'$ is $d$-uniform. Thus, \Cref{lem:uniform}, along with the facts that $D$ is a $t$-set consisting of even numbers and $D \ne \{2, 4, \dots, 2t\}$, implies $|\cF'| \le \prod\limits_{\ell \in D} \frac{2n}{\ell} \le \frac{n^t}{(t-1)!(t+1)}$. 
    \vspace{-1em}
    \item For any $S \subseteq [d]$, denote $\cF_S \eqdef \bigl\{ A \in \cF \setminus \cF' \colon A \cap [d] = S \bigr\}$. For any pair of distinct $A, B \in \cF_S$, from $|A \bigtriangleup B| \le \bigl|A \setminus [d]\bigr| + \bigl|B \setminus [d]\bigr| < d$ we deduce that $\cF_S$ is a $D \setminus \{d\}$-distance family. Then \Cref{coro:distance_weak} implies $|\cF_S| \le (t-1) \cdot n^{t-1}$. It follows that $|\cF \setminus \cF'| < 2^{4t^2}(t-1) \cdot n^{t-1}$. 
\end{itemize}
\vspace{-0.5em}
Combining the estimates above, we conclude the first part of~\Cref{thm:exact} by estimating
\[
|\cF| = |\cF'| + |\cF \setminus \cF'| \le \frac{n^t}{(t-1)!(t+1)} + 2^{4t^2} (t-1) \cdot n^{t-1} \le \Bigl( \frac{t}{t+1} + o(1) \Bigr) \cdot \binom{n}{t}. 
\]

\subsection{The unique maximizer \texorpdfstring{$D_0 = \{2, 4, \dots, 2t\}$}{D = \{2,4,...,2t\}}}

Let $n \ge 2t + 2$ and $D_0 = \{2, 4, \dots, 2t\}$. Then $A \bigl( \cH_n, \{2, 4, \dots, 2t\} \bigr) = f_{D_0}(n)$. We need to prove that
\[
f_{D_0}(n) = \begin{cases}
    \binom{n}{0} + \binom{n}{2} + \dots + \binom{n}{t-2} + \binom{n}{t} \qquad &\text{if $t$ is even}, \\
    \binom{n}{1} + \binom{n}{3} + \dots + \binom{n}{t-2} + \binom{n}{t} \qquad &\text{if $t$ is odd}. 
\end{cases}
\]
When $t$ is even (resp.~odd), the bound is achieved by subsets of $[n]$ of even (resp.~odd) size up to $t$.

Let $\cF \subset 2^{[n]}$ be a $D_0$-distance set family. We are going to construct an auxiliary graph in which we may view $\cF$ as an independent set. The following corollary of Cauchy's Interlacing Theorem was discovered earlier by Cvetkovi\'c. This provides a useful technique to upper bound the independence number $\alpha(G)$ of a graph $G$. (A proof can be found after \cite[Corollary 2.5]{huang_klurman_pohoata}.) 

\begin{proposition}[\cite{cvetkovic}] \label{prop:Cvetkovic}
    Let $G = \bigl( [m], E \bigr)$ be an $m$-vertex graph and $M$ be a pseudo-adjacency matrix of $G$, i.e., a symmetric $m \times m$ matrix with $M_{ij} = 0$ whenever $ij \not\in E$. Let $n_{\le 0}(M), \, n_{\ge 0}(M)$ be the number of non-positive, non-negative eigenvalues of $M$. Then $\alpha(G) \le \min \bigl\{ n_{\le 0}(M), n_{\ge 0}(M) \bigr\}$. 
\end{proposition}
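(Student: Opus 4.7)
The plan is to deduce \Cref{prop:Cvetkovic} via a direct application of Cauchy's interlacing theorem. I would first let $I \subseteq [m]$ be a maximum independent set of $G$, with $\alpha \eqdef \alpha(G) = |I|$, and focus on the $\alpha \times \alpha$ principal submatrix $M_I$ of $M$ obtained by restricting to the rows and columns indexed by $I$.

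The first step is to observe that $M_I$ is the zero matrix. For any $i, j \in I$, the pair $ij$ is not an edge of $G$: when $i \ne j$ this is just the independence of $I$, while $ii \notin E$ follows from the convention that the simple graph $G$ has no self-loops. Thus the pseudo-adjacency hypothesis $M_{ij} = 0$ for $ij \notin E$ applies to every entry of $M_I$, so $M_I = 0$ and all $\alpha$ of its eigenvalues vanish.

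Next, I would invoke Cauchy's interlacing theorem. Writing the eigenvalues of $M$ in decreasing order as $\lambda_1 \ge \cdots \ge \lambda_m$ and those of $M_I$ as $\mu_1 \ge \cdots \ge \mu_\alpha$ (all zero from the previous step), interlacing supplies $\lambda_i \ge \mu_i \ge \lambda_{m-\alpha+i}$ for every $i \in [\alpha]$. Since each $\mu_i = 0$, the left inequality forces $\lambda_1, \ldots, \lambda_\alpha \ge 0$, whence $n_{\ge 0}(M) \ge \alpha$; symmetrically the right inequality forces $\lambda_{m-\alpha+1}, \ldots, \lambda_m \le 0$, whence $n_{\le 0}(M) \ge \alpha$. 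Taking the minimum yields the claim $\alpha(G) \le \min \bigl\{ n_{\le 0}(M), n_{\ge 0}(M) \bigr\}$.

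There is really no hard part here, which is precisely what makes the proposition useful as an off-the-shelf lemma: once the submatrix $M_I$ is identified as identically zero, everything else is Cauchy interlacing in its textbook form. The only point worth flagging is the treatment of the diagonal of $M_I$, which implicitly relies on $G$ being a simple graph so that $ii \notin E$ and the pseudo-adjacency condition kills $M_{ii}$ along with the off-diagonal entries.
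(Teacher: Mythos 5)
Your proof is correct, and it is precisely the standard Cauchy-interlacing argument that the paper itself points to (it explicitly introduces the proposition as ``the following corollary of Cauchy's Interlacing Theorem'' and cites Cvetkovi\'c rather than reproving it). No discrepancy to report.
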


If the distance $|X \bigtriangleup Y|$ is even, then we observe that $|X|$ and $|Y|$ are of the same parity. So, $\cF$ consists solely of even-sized subsets or odd-sized subsets. By taking the symmetric difference with $\{1\}$ altogether, we may assume without loss of generality that $\cF \subseteq \cX \eqdef \bigl\{ X \subseteq 2^{[n]} \colon \text{$|X|$ is even} \bigr\}$. 

For each integer $k \ge 0$, construct an $|\cX| \times |\cX|$ matrix $M_k$ whose entries are either $0$ or $1$ with
\[
\text{$(M_k)_{X, Y} = 1$ if and only if $X, Y \in \cX$ and $|X \bigtriangleup Y| = 2k$}. 
\]
Let $V \eqdef \R^{\cX}$ be the vector space indexed by $\cX$. For any $S \subseteq [n]$, set $\bm{v}_S \in V$ with $(\bm{v}_S)_X \eqdef (-1)^{|X \cap S|}$. 

\begin{proposition} \label{prop:eigen}
    We have $\bm{v}_S = \bm{v}_{[n] \setminus S} \, (\forall S \subseteq [n])$ and the vectors $\bigl\{ \bm{v}_S \colon S \subseteq [n] \bigr\}$ form an orthogonal basis of $V$. Moreover, the matrix $M_k$ has an eigenvector $\bm{v}_S$ of eigenvalue $\sum_{i=0}^{|S|}(-1)^{i}\binom{|S|}{i}\binom{n-|S|}{2k-i}$. 
\end{proposition}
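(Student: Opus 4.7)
The plan is to verify the three claims in order through direct character-style computations on the Boolean lattice, restricted to the even-weight layer $\cX$.

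First, for $\bm{v}_S = \bm{v}_{[n] \setminus S}$, I would observe that for every $X \in \cX$, the identity $|X \cap S| + |X \cap ([n] \setminus S)| = |X|$ is an even integer, so the two exponents $|X \cap S|$ and $|X \cap ([n] \setminus S)|$ agree in parity, and the two vectors coincide coordinate-by-coordinate.

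Next, for orthogonality I would compute
\[
\bracket{\bm{v}_S, \bm{v}_T} = \sum_{X \in \cX} (-1)^{|X \cap S| + |X \cap T|} = \sum_{X \in \cX} (-1)^{|X \cap (S \bigtriangleup T)|},
\]
using $|X \cap S| + |X \cap T| \equiv |X \cap (S \bigtriangleup T)| \pmod 2$. Setting $U \eqdef S \bigtriangleup T$ and decomposing each $X \in \cX$ as $X = X_1 \sqcup X_2$ with $X_1 \subseteq U$ and $X_2 \subseteq [n] \setminus U$, the parity constraint that $|X_1| + |X_2|$ is even splits the sum into a product in which the factor over $[n] \setminus U$ vanishes whenever $U \notin \{\varnothing, [n]\}$. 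Translating back, the $\bm{v}_S$'s are pairwise orthogonal across distinct equivalence classes of the involution $S \mapsto [n] \setminus S$. Since there are $2^{n-1}$ such classes and $\dim V = |\cX| = 2^{n-1}$, together with the first claim this gives an orthogonal basis.

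For the eigenvalue, I would fix $X \in \cX$ and change variables via $Z \eqdef X \bigtriangleup Y$. As $Y$ ranges over $\cX$ with $|X \bigtriangleup Y| = 2k$, $Z$ ranges over all $2k$-subsets of $[n]$, and $Y = X \bigtriangleup Z$ is automatically even-sized since $|X|$ and $|Z|$ are both even. Using $(X \bigtriangleup Z) \cap S = (X \cap S) \bigtriangleup (Z \cap S)$, the sign factorizes as $(-1)^{|Y \cap S|} = (-1)^{|X \cap S|} (-1)^{|Z \cap S|}$, so
\[
(M_k \bm{v}_S)_X = (\bm{v}_S)_X \cdot \sum_{Z \subseteq [n], \, |Z| = 2k} (-1)^{|Z \cap S|}.
\]
Splitting $Z$ into its $i$-element intersection with $S$ and its $(2k-i)$-element intersection with $[n] \setminus S$ yields exactly $\sum_{i=0}^{|S|} (-1)^i \binom{|S|}{i} \binom{n-|S|}{2k-i}$, which depends only on $|S|$ and is independent of $X$.

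Everything reduces to Fourier analysis on $\F_2^n$ restricted to the even-weight subgroup, so no step is genuinely delicate. The only point requiring a little care is the basis count: one must correctly account for the involution $S \mapsto [n] \setminus S$ so that the $2^{n-1}$ equivalence classes match $\dim V$, and must handle the edge cases $U = \varnothing$ and $U = [n]$ separately when showing the orthogonality sum vanishes.
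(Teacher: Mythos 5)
Your argument is correct and follows essentially the same Fourier-style computation as the paper, with the change of variables $Z \eqdef X \bigtriangleup Y$ being a cleaner packaging of the paper's ``flip each element of $X$'' step. One small imprecision worth flagging: in the orthogonality computation, after fixing $U \eqdef S \bigtriangleup T$ with $U \notin \{\varnothing, [n]\}$, what actually happens is that the $[n]\setminus U$ part contributes the same count $2^{|[n]\setminus U|-1}$ for either parity of $|X_1|$ (this needs $U \neq [n]$), after which the remaining sum $\sum_{X_1 \subseteq U}(-1)^{|X_1|}$ vanishes (this needs $U \neq \varnothing$) --- so it is the $U$-factor, not the $[n]\setminus U$-factor, that vanishes, and both edge conditions are used.
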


\begin{proof}
    From $X \in \cX$ we deduce that
    \[
    |X \cap S| \equiv |X| - |X \cap S| = \bigl| X \cap \bigl([n] \setminus S\bigr) \bigr| \pmod 2, 
    \]
    and so $\bm{v}_S = \bm{v}_{[n] \setminus S}$. For any pair of $S, S' \subseteq [n]$ with $S' \ne S$ and $S' \ne [n] \setminus S$, we have
    \begin{align*}
        \langle \bm{v}_S, \bm{v}_{S'} \rangle &= \sum_{X \in \cX} (-1)^{|X \cap S|} \cdot (-1)^{|X \cap S'|} = \sum_{X \in \cX} (-1)^{|X \cap \left( S \bigtriangleup S' \right)|} \\
        &= \sum_{k=0}^{\infty} \biggl( \sum_{X \subseteq [n] \colon |X| = 2k} (-1)^{|X \cap (S \bigtriangleup S')|} \biggr) \\
        &= \sum_{k=0}^{\infty} \Biggl( \sum_{i=0}^{2k} (-1)^i \binom{|S \bigtriangleup S'|}{i} \binom{n - |S \bigtriangleup S'|}{2k-i} \Biggr) \\
        &= \sum_{i=0}^{\infty} \Biggl( (-1)^i \binom{|S \bigtriangleup S'|}{i} \cdot \sum_{k = \lceil i/2 \rceil}^{\infty} \binom{n-|S \bigtriangleup S'|}{2k-i} \Biggr) \\
        &\overset{(\text{a})}{=} \sum_{i=0}^{\infty} \Biggl( (-1)^i \binom{|S \bigtriangleup S'|}{i} \cdot \frac{2^{n - |S \bigtriangleup S'|}}{2} \Biggr) \overset{(\text{b})}{=} 0, 
    \end{align*}
    where (a) follows from $S' \ne [n] \setminus S$ together with $\sum_{i=0}^{\infty} (-1)^i \binom{m}{i} = 0$, and (b) follows from $S \ne S'$. Since $\bigl| \bigl\{ \bm{v}_S \colon S \subseteq [n] \bigr\} \bigr| = 2^{n-1} = |\cX|$, the set $\bigl\{ \bm{v}_S \colon S \subseteq [n] \bigr\}$ forms an orthogonal basis of $V$. 
    
    Abbreviate $[n] \setminus Z$ as $Z^c$ for any $Z \subseteq [n]$. For $X, Y \subseteq [n]$, observe that 
    \[
    |Y \cap S| = |X \cap S| + |X^{c} \cap Y \cap S| - |X \cap Y^{c} \cap S|. 
    \]
    Think about $\bm{v}_S$ as a column vector. It then follows that
    \begin{align*}
        (M_k \cdot \bm{v}_S)_{X} &= \sum_{Y \in \cX} (M_k)_{X, Y} \cdot (\bm{v}_S)_Y = \sum_{Y \subseteq [n] \colon |X \bigtriangleup Y| = 2k} (-1)^{|Y \cap S|} \\
        &= (-1)^{|X \cap S|} \sum_{Y \subseteq [n] \colon |X \bigtriangleup Y| = 2k} (-1)^{|X^{c} \cap Y \cap S| + |X \cap Y^{c} \cap S|} \\
        &\overset{(\text{c})}{=} (-1)^{|X \cap S|} \sum_{Y \subseteq [n] \colon |Y| = 2k} (-1)^{|Y \cap S| + |\varnothing|} \\
        &= (\bm{v}_S)_X \cdot \sum_{i=0}^{|S|} (-1)^i \binom{|S|}{i}\binom{n-|S|}{2k-i}, 
    \end{align*}
    where at (c) we flip for every element of $X$ the belonging between it and $X, Y$ altogether, hence we assume $X = \varnothing$ in the summation. This is valid because such a flipping does not affect the summand $(-1)^{|X^{c} \cap Y \cap S| + |X \cap Y^{c} \cap S|}$. So, $\bm{v}_S$ is an eigenvector of $M_k$ whose eigenvalue is exactly 
    \[
    \sum_{i=0}^{|S|}(-1)^{i}\binom{|S|}{i}\binom{n-|S|}{2k-i}. \qedhere
    \]
\end{proof}

Construct a graph $G$ on vertex set $\cX$, where we put an edge between distinct $S, T \in \cX$ if and only if $|S \bigtriangleup T| \ge 2t+2$. The $D_0$-distance family $\cF$ corresponds to an independent set in $G$. Define a matrix $M \eqdef \sum_{k \ge t+1} \binom{k-1}{t} M_k$. (The infinite range of $k$ is unproblematic because $M_k = 0$ whenever $k > n/2$.) Then $M$ is a pseudo-adjacency matrix of $G$ if we identify $\cX$ and $\bigl[ |\cX| \bigr]$. 

For every integer $k$, denote by $\bigl[x^k\bigr] f(x)$ the coefficient of the $x^k$ term in the Laurent expansion of a meromorphic function $f(x)$ in the $|x| > 1$ region on the complex plane. For instance, we have the identity $\binom{k-1}{t} = (-1)^{t+1} \bigl[x^{-2k}\bigr] \Bigl(\frac{1}{(1-x^2)^{t+1}}\Bigr)$ because of $\binom{-(t+1)}{k} = (-1)^k \binom{k+t}{t}$ and the expansion
\vspace{-0.5em}
\[
\frac{1}{(1-x^2)^{t+1}} = (-1)^{t+1} x^{-2(t+1)} (1 - x^{-2})^{-(t+1)} = (-1)^{t+1}x^{-2(t+1)} \sum_{k = 0}^\infty \binom{k+t}{t} x^{-2k}. 
\]
By \Cref{prop:eigen}, $\bm{v}_S$ is an eigenvector of each $M_k$, so the eigenvalue of $M$ with respect to it is
\begin{align*}
    \lambda_S &= \sum_{k \ge t+1} \Biggl( \binom{k-1}{t} \sum_{i=0}^{|S|}(-1)^{i}\binom{|S|}{i}\binom{n-|S|}{2k-i} \Biggr) \\
    &= (-1)^{t+1} \sum_{k \ge t+1} \bigl[x^{-2k}\bigr] \biggl(\frac{1}{(1-x^2)^{t+1}}\biggr) \cdot \bigl[x^{2k}\bigr] \Bigl( (1-x)^{|S|} (1+x)^{n-|S|} \Bigr) \\
    &= (-1)^{t+1} \bigl[x^0\bigr] \biggl( \frac{(1-x)^{|S|} (1+x)^{n-|S|}}{(1-x^2)^{t+1}} \biggr). 
\end{align*}

\vspace{-1.5em}
\begin{itemize}
    \item If $|S| \in \{t+1, \dots, n-t-1\}$, then the denominator of $\frac{(1-x)^{|S|} (1+x)^{n-|S|}}{(1-x^2)^{t+1}}$ gets canceled out and hence the quotient is a polynomial in $x$. So, the constant term is exactly the evaluation of the function at $x = 0$, which is $1$. It follows that $\lambda_S=(-1)^{t+1}$. 
    \vspace{-0.5em}
    \item If $|S| \in \{1, \dots, t\}$, then we compute
        \begin{align*}
            \bigl[x^0\bigr] \frac{(1-x)^{|S|} (1+x)^{n-|S|}}{(1-x^2)^{t+1}} &= \bigl[x^0\bigr] \frac{(1+x)^{n-|S|-t-1}}{(1-x)^{t+1-|S|}} \\
            &= \bigl[x^0\bigr] \biggl( (-1)^{t+1-|S|} \cdot \frac{1}{x^{t+1-|S|}} \cdot \frac{(1+x)^{n-|S|-t-1}}{(1-\frac{1}{x})^{t+1-|S|}} \biggr) \\
            &= (-1)^{t+1-|S|} \bigl[x^{t+1-|S|}\bigr] \biggl( (1+x)^{n-|S|-t-1} \Bigl(1+\frac{1}{x}+\cdots\Bigr)^{t+1-|S|} \biggr). 
        \end{align*}
        It follows from $n \ge 2t + 2$ that $n - |S| - t - 1 \ge t + 1 - |S|$. So, every coefficient is non-zero, and hence the sign of $\lambda_S$ is $(-1)^{t+1} \cdot (-1)^{t+1-|S|} = (-1)^{|S|}$. 
    \vspace{-0.5em}
    \item If $|S| \in \{n-t, \dots, n\}$, then $\bm{v}_S = \bm{v}_{[n] \setminus S}$ (part of \Cref{prop:eigen}) implies that $\lambda_S = \lambda_{[n] \setminus S}$. It follows from the previous case that the sign of $\lambda_S$ is $(-1)^{n-|S|}$. 
\end{itemize}
\vspace{-0.5em}
According to \Cref{prop:Cvetkovic}, 
\vspace{-0.25em}
\begin{itemize}
    \item if $t$ is even, then $\alpha(G) \le n_{\ge 0}(M) = \binom{n}{0} + \binom{n}{2} + \cdots + \binom{n}{t}$; 
    \vspace{-0.25em}
    \item if $t$ is odd, then $\alpha(G) \le n_{\le 0}(M) = \binom{n}{1} + \binom{n}{3} + \cdots + \binom{n}{t}$. 
\end{itemize}
\vspace{-0.25em}
We thus conclude the proof of \Cref{thm:exact} by noticing that $|\cF| \le \alpha(G)$. 

\section{Concluding remarks} \label{sec:remark}
In this paper, we determine $f_D(n)$ asymptotically for all homogeneous arithmetic progress $D$. 
When $D$ is non-homogeneous, \Cref{thm:nonhomo} shows that $f_D(n)$ grows linearly in $n$. We are unable to derive an asymptotically tight bound on $f_D(n)$. It remains an interesting problem to determine the leading coefficient here. We propose the following easier problem. 

\begin{question} \label{question:1}
    Suppose $D$ is a non-homogeneous arithmetic progression with $\min(D) \ge 3$. Is it true that for every sufficiently large $n$, we have $f_D(n) \le (1-\veps)n$ for some $\veps > 0$? 
\end{question}

One natural relaxation of the above question is to impose additional uniformity constraints on the family $\cF$ and make it an $L$-intersecting family for some $L$, where many well-developed methods and theories can be applied. A prototypical example of such a relaxed question is the following. 

\begin{question} \label{question:2}
    Let $p \ge 3$ be a prime with $p \nmid r$. Suppose $\cF \subseteq 2^{[n]}$ is a set family satisfying: 
    \vspace{-0.5em}
    \begin{itemize}
        \item $|A| \equiv r \not\equiv 0 \pmod p$ holds for any $A \in \cF$, and
        \vspace{-0.5em}
        \item $|A \cap B| \equiv 0 \pmod p$ holds for any pair of distinct $A, B \in \cF$. 
    \end{itemize}
    \vspace{-0.5em}
    Is it true that there exists some $\veps > 0$ such that $|\cF| \le (1-\varepsilon)n$ all sufficiently large $n$? 
\end{question}

In \Cref{question:2}, the family $\cF$ is an $\{2r, 2r+p, 2r+2p, \cdots\}$-distance family. When $r = 1$, this is evidently false as witnessed by all singletons in $[n]$. For a general $r$, Shengtong Zhang provided the following counterexample: let $q \equiv r \pmod{p}$ be a sufficiently large prime (power) and consider the finite projective plane $\operatorname{PG}(2, \F_q)$. It contains $q^2+q+1$ points, $q^2+q+1$ lines (each of size $q+1$), where any two lines intersect in exactly one point. By adding $p-1$ dummy points simultaneously into each line, one obtains an $\{r \bmod p\}$-intersecting family of size $q^2+q+1$ on $[q^2+q+p]$. 

Nevertheless, this example does not refute \Cref{question:1}, for the distance set $D = \{2q\}$ therein grows with $n$. One may further hope that, if $D$ is a fixed non-homogeneous arithmetic progression with $\min D \ge 3$, then $f_D(n) \le \frac{n}{2}$. However, the following shows that this is false when $p = 3$. 

\begin{example}
    There exists a $5$-uniform $\{0, 3\}$-intersecting family on $[n]$ of size $5 \lfloor \frac{n}{9} \rfloor$. 
\end{example}

\begin{proof}
    For $i = 1, 2, \dots, \bigl\lfloor \frac{n}{9} \bigr\rfloor$, write $\widetilde{i} \eqdef 9(i-1)$ and consider
    \[
    \widetilde{i} + \{1,2,3,4,5\}, \quad \widetilde{i} + \{1,2,3,6,7\}, \quad \widetilde{i} + \{1,2,3,8,9\}, \quad \widetilde{i} + \{1,2,4,6,8\}, \quad \widetilde{i} + \{1,3,4,6,9\}. 
    \]
    Then the $5 \bigl\lfloor \frac{n}{9} \bigr\rfloor$ many $5$-element subsets from above form a $\{0, 3\}$-intersection family. 
\end{proof}

This construction shows that the lower bounds in \Cref{thm:nonhomo} are not asymptotically correct for some specific $D$, notably $D = \{4, 10\}$. To be specific, it implies that $f_{\{4, 10\}}(n) \ge 5 \bigl\lfloor \frac{n}{9} \bigr\rfloor$. 

\smallskip

Regarding~\Cref{conj:maxcode}, our approach fails to deduce the first equality for all $D$. This is because in one of our tools, the Deza--Erd\H{o}s--Frankl theorem (\Cref{thm:deza_erdos_frankl}), one needs $n$ to be at least exponential in the uniformity $k$. \Cref{sec:exact} in fact establishes a slightly stronger result than \Cref{thm:exact} that $A(\cH_n, D) \le A \bigl( \cH_n, \{2, 4, \dots, 2t\} \bigr)$ holds for all $D$ and large $n$ with $|D| = t$ and $\max D \le \frac{\log n}{2}$. Towards a complete resolution of \Cref{conj:maxcode}, we suspect that essential new ideas are required. In particular, when $t = 1$, the existence of an $n$-by-$n$ Hadamard matrix implies that $A \bigl( \cH_n, \{2\} \bigr) = A \bigl( \cH_n, \{\frac{n}{2}\} \bigr) = n$. This shows that, in constrast to the case when $n$ is sufficiently large compared to $D$, the set $D_0 = \{2, 4, \dots, 2t\}$ is not necessarily a unique maximizer of $A(\cH_n, D)$ in general, and hence our proof cannot be easily adapted to resolve the conjecture. 

\section*{Acknowledgments}

We are grateful to Boris Bukh for bringing the Deza--Erd\H{o}s--Frankl theorem to our attention. We also benefited from discussions with Andrey Kupavskii, Cosmin Pohoata, Wei-Hsuan Yu, and Shengtong Zhang. This work was initiated at the $2^{\text{nd}}$ ECOPRO Student Research Program in the summer of 2024. MO would like to thank ECOPRO for hosting. Part of this work was done during a visit of ZD and MO to Shandong University. They are thankful to Guanghui Wang for his support.

\bibliographystyle{plain}
{
\bibliography{general_Kleitman}}

\end{document}